\title{Relational Hypersequent $\mathbf{S4}$ and $\mathbf{B}$ are Cut-Free  Hypersequent Incomplete}
\author{Kai Tanter}
\date{}
\tikzset{
      modal/.style={>=stealth,shorten >=1pt,shorten <=1pt,auto,node distance=1.5cm,semithick},
      world/.style={circle,draw,minimum size=1cm,fill=gray!15},
      point/.style={circle,draw,fill=black,inner sep=0.5mm},
      reflexive/.style={->,in=120,out=60,loop,looseness=#1},
      reflexive/.default={5},reflexive point/.style={->,in=135,out=45,loop,looseness=#1},
      reflexive point/.default={25},
}
\tikzset{
coil/.style={decorate, decoration={coil,amplitude=4pt,segment length=5pt}},
snake/.style={decorate, decoration={snake}},
zigzag/.style={decorate, decoration={zigzag}}
}
\theoremstyle{definition}
\newtheorem{definition}{Definition}[section]
\theoremstyle{lemma}
\newtheorem{lemma}{Lemma}[section]
\newtheorem{theorem}{Theorem}
\newcommand{\sslash}{\mathbin{/\mkern-6mu/}}
\begin{document}
\maketitle

\section{Introduction}

Andrew Parisi's relational hypersequent systems for standard modal logics $\mathbf{K}$ through to $\mathbf{S5}$ \cite{Parisi2017,ParisiForthcoming} are the first candidate hypersequent systems to meet two commonly cited criteria for ``good'' proof systems for modal logic: modularity and Do\v{s}en's Principle.\footnote{There are of course many other kinds of proof systems for modal logics. For example, display logics \cite{Wansing1998} and labelled sequent systems \cite{Negri2005}. See also \cref{fn:tree} for tree hypersequent systems.} Parisi's systems are intended to both provide the basis for an inferentialist account of modality and meet other criteria such as cut-admissibility and adequacy with regard to Kripke frames for these logics. In \cite{Parisi2017,ParisiForthcoming}, Parisi provides an indirect proof of sequent completeness for his systems via a translation to sequent systems for the respective modal logics. However, the proofs for his relational hypersequent $\mathbf{S4}$ and $\mathbf{B}$ require treating Cut as a basic rule. Samara Burns and Richard Zach \cite{BurnsForthcoming} have improved on these results by providing direct cut-free proofs of hypersequent completeness for relational hypersequent $\mathbf{K}$, $\mathbf{T}$ and $\mathbf{D}$, and Restall \cite{Restall2007,Restall2009} has done the same for a system equivalent to Parisi's relational hypersequent $\mathbf{S5}$. The current paper shows that Parisi's relational hypersequent $\mathbf{K4}$, $\mathbf{S4}$, $\mathbf{KB}$, and $\mathbf{B}$ are cut-free hypersequent incomplete, and that the former two are also cut-free sequent and formula incomplete, relative to standard Kripke frames for $\mathbf{K4}$, $\mathbf{S4}$, $\mathbf{KB}$ and $\mathbf{B}$ respectively. As a result, the systems fail to meet the criteria of cut-admissibility and adequacy with regard to Kripke frames. This leaves open the question of what hypersequent proof systems can meet Parisi's intended criteria and also of what kind of models Parisi's relational hypersequent $\mathbf{K4}$, $\mathbf{S4}$, $\mathbf{KB}$, and $\mathbf{B}$ are adequate in regard to.

We begin in \Cref{sec:background} by providing a brief overview of some of the criteria considered in the literature for ``good'' proof systems for modal logics. Next in sections \ref{sec:l}, \ref{sec:m} and \ref{sec:p} we respectively define the language, models and proof systems that will be studied in the paper. 
In \Cref{sec:RS4Incompleteness} we show that relational hypersequent $\mathbf{K4}$ and $\mathbf{S4}$ are cut-free  hypersequent, sequent and formula incomplete relative to Kripke frames for $\mathbf{S4}$ and $\mathbf{K4}$ respectively. In \Cref{sec:RBIncompleteness} we prove that relational hypersequent $\mathbf{KB}$ and $\mathbf{B}$ are cut-free hypersequent incomplete. We then contrast the two sets of results in \Cref{sec:contrast}, before \Cref{sec:end} concludes with a brief discussion of remaining open questions and consequences of these results.

\section{Background}
\label{sec:background}

Despite its axiomatic origins, contemporary work in modal logic is overwhelmingly model-theoretic. Work in the proof theory of modal logic has been focused on developing proof systems that are both adequate for different classes of models and which have proof-theoretically desirable properties. This work has various motivations, one of which is to provide the basis for an inferentialist account of modality.
Inferentialism is a theory of meaning which claims that meaning is determined by norms governing the use of expressions.\footnote{See \cite{Ballarin2021} for an overview of the development of modern modal logic; \cite{Brandom2000,Steinberger2017} for an overview of inferentialism; and \cite{Negri2011,Poggiolesi2012} for accessible overviews of work in the proof theory of modal logics. \cite{Parisi2017,ParisiForthcoming, Poggiolesi2011} also contains a discussion of and references to existing literature. } There is a natural fit between inferentialism and proof-theoretic approaches to semantics, as a proof theory can be interpreted as a formal representation of norms governing the use of expressions in a given language. One way for inferentialists to account for modality is to construct proof systems for modal logics that can be interpreted as determining the meaning of modals expressions like necessity $\Box$ and possibility $\lozenge$. Parisi \cite{Parisi2017,ParisiForthcoming} uses this to motivate several criteria for proof systems for modal logics, two of which are particularly relevant for the current paper: 
\begin{itemize}
\item Cut Admissibility: in a sequent calculus or similar setting, the resulting logic from the cut-free calculus is identical to that from the calculus with $\mathrm{Cut}$ as a basic rule;\footnote{In a natural deduction setting this would be the requirement that the system normalises.} and
\item Do\v{s}en's Principle: this principle applies to a set of calculi for modal logics and holds when the operational rules are shared, with calculi only differing in their structural rules. 
\end{itemize}
One way to think about Do\v{s}en's Principle is as a proof-theoretic analog of the way in which standard Kripke models for different modal logics share truth conditions for connectives but differ in the restrictions placed on the accessibility relation. A related but different criterion is that of modularity. Burns and Zach state it in a form directly relating proof systems to Kripke models:
\begin{itemize}
\item Modularity: ``each property of the accessibility relation [of a Kripke model] is captured by a single rule or set of rules.'' \cite[p.2]{BurnsForthcoming}
\end{itemize}
These properties are often considered desirable but are by no means universally endorsed. For example, see \cite[Chapter 1.10]{Poggiolesi2011} for an an argument against accepting Do\v{s}en's Principle.\footnote{\label{fn:tree}
\AxiomC{$\phi,\Gamma\Rightarrow\Delta$}
\RightLabel{\scriptsize{$\mathbf{t}$}}
\UnaryInfC{$\Box\phi,\Gamma\Rightarrow\Delta$}
\DisplayProof Poggiolesi's tree hypersequent systems \cite{Poggiolesi2011} are also motivated by inferentialism. In rejecting Do\v{s}en's Principle, Poggiolesi states the principle slightly differently and also appears to be working with a different distinction between operational (logical) and structural rules. For example, the rule $\mathbf{t}$ is classified as a structural rule despite it essentially involving $\Box$ in the conclusion sequent \cite[\S1.10]{BurnsForthcoming}.  
This differs from standard structural rules like weakening and contraction, which do not essentially involve any particular vocabulary, a feature that Parisi's structural rules do have. Do\v{s}en's Principle, as stated above, does not hold for Poggiolesi's systems because one system is obtained from another by varying both the structural and operational rules.}

\subsection{Language}
\label{sec:l}

\theoremstyle{definition}
\begin{definition}[$\mathcal{L}$]
\label{def:L}
$\mathcal{L}$ is the language made up of denumerably many atomic formula $p, q,...$, the unary connectives $\neg$ and $\Box$, and the binary connectives $\land$ and $\lor$, and whose sentences are all and only those generated recursively from the following rule: all atomic formulas $p$ are sentences and if $\phi$ and $\psi$ are sentences then so are $\neg\phi$, $\Box\phi$, $\phi\land\psi$ and $\phi\lor\psi$.\\
We will use lower case Greek as sentence variables and upper case for sets of sentences.
\end{definition}

\theoremstyle{definition}
\begin{definition}[Sequents and Hypersequents]
A \textit{sequent} $\Gamma \Rightarrow \Delta$ is an ordered pair of finite sets of sentences, with the turnstile $\Rightarrow$ separating each member of the pair. $\Gamma,\phi$ will be written as shorthand for $\Gamma\cup\{\phi\}$. Instead of writing the empty set $\emptyset$ we simply leave the relevant side of the turnstile blank. $S$, possibly subscripted, is used to represent arbitrary sequents in the metalanguage. \\
A \textit{hypersequent} $S_1\sslash...{\sslash}S_n$ is a finite sequence (list) of sequents, with $\sslash$ separating each member of the sequence. $G$ and $H$, possibly ``primed'' , are used to represent arbitrary hypersequents in our language. 

\end{definition}

\subsection{Models}
\label{sec:m}

\theoremstyle{definition}
\begin{definition}[Frames and Models]
\label{def:Kframe}
A Kripke frame $\mathfrak{F}$ is a pair $\langle W, R \rangle$ of points $W$ and a  binary relation $R$ on $W$.\\
 A Kripke model $\mathfrak{M}$ is a triple $\langle W, R, v, \rangle$ in which $\langle W,R\rangle$ is a Kripke frame and $v$ is a valuation function from members of $W$ and sentences of $\mathcal{L}$ to the truth values $\{1,0\}$.\\
We restrict $v$ as follows:
\begin{itemize}
\item $v(\neg\phi,x)= 1$ iff $v(\phi,x)=0$;
\item $v(\phi\land\psi,x)=1$ off $v(\phi,x)=1$ and $v(\psi,x)=1$; 
\item $v(\phi\lor\psi,x)=1$ off $v(\phi,x)=1$ or $v(\psi,x)=1$; and
\item $v(\Box\phi,x)=1$ iff for all $y$, if $xRy$ then $v(\phi,y)=1$. 
\end{itemize}
The conditions for $\neg$, $\land$ and $\lor$ are as in Boolean valuations but relative to a point.
\end{definition}

\noindent We can obtain classes of Kripke frames (models) for various modal logics by placing restrictions on $R$.
\theoremstyle{definition}
\begin{definition}[Branch of points]
A \textit{branch of points} $w_1,...,w_n$ is a sequence of points in a frame $\mathfrak{F}$ such that $w_iRw_{i+1}$ for all $i: 1\leq i \leq n-1$.
\end{definition}

\theoremstyle{definition}
\begin{definition}[Countermodel]
\label{def:counterexample}
\textit{Sequents}: 
A model $\mathfrak{M}$ is a \textit{countermodel} to a sequent $\Gamma\Rightarrow\Delta$ at a point $w$ iff for all $\phi\in\Gamma, v(\phi,w)=1$ and for all $\psi\in\Delta, v(\psi,w)=0$.\\
\textit{Hypersequents}:
A model $\mathfrak{M}$ is a \textit{countermodel} to a hypersequent $G$ iff there is a branch of points $w_1,...,w_n$ in $\mathfrak{M}$ such that $\mathfrak{M}$ is a countermodel to each sequent $S_i\in H$ at $w_i$ for all $i$:  $1\leq i\leq n$. \\
We write $\nvDash_XH$ to mean that a particular hypersequent $H$ has a countermodel in the class of $X$ frames, and $\vDash_X H$ to mean that a particular hypersequent $H$ has no countermodel, i.e.\ is valid, in the class of $X$ frames.

\end{definition}

\subsection{Proofs}
\label{sec:p}

\theoremstyle{definition}
\begin{definition}[The Hypersequent Calculus $\mathrm{RK}$]
A derivation in $\mathrm{RK}$ is a tree all of whose leaves are instances of the axiom $\mathrm{Id}$ and each non-leaf node is obtained from the nodes above via one of the rules of $\mathrm{RK}$ (see \Cref{fig:RK}).

\begin{figure}[H]

\caption{Rules of $\mathrm{RK}$}
\label{fig:RK}
\begin{tabular}{lcl}
\\
\AxiomC{}
\RightLabel{\scriptsize{$\mathrm{Id}$}}
\UnaryInfC{$p\Rightarrow p$}
\DisplayProof

&

\AxiomC{$G\sslash \Gamma\Rightarrow \phi,\Delta\sslash H$}
\AxiomC{$G\sslash \Gamma, \phi\Rightarrow \Delta\sslash H$}
\RightLabel{\scriptsize{$\mathrm{Cut}$}}
\BinaryInfC{$G\sslash \Gamma\Rightarrow \Delta\sslash H$}
\DisplayProof

 &

\AxiomC{$G$}
\RightLabel{\scriptsize{$EW\mathrm{L}$}}
\UnaryInfC{$\Rightarrow\sslash G$}
\DisplayProof
\\\\

\AxiomC{$G$}
\RightLabel{\scriptsize{$EW\mathrm{R}$}}
\UnaryInfC{$G\sslash \Rightarrow$}
\DisplayProof

&
\AxiomC{$G\sslash \Gamma\Rightarrow \Delta\sslash H$}
\RightLabel{\scriptsize{$T \mathrm{L}$}}
\UnaryInfC{$G\sslash \Gamma,\phi \Rightarrow \Delta\sslash H$}
\DisplayProof

&

\AxiomC{$G\sslash \Gamma\Rightarrow \Delta\sslash H$}
\RightLabel{\scriptsize{$T \mathrm{R}$}}
\UnaryInfC{$G\sslash \Gamma \Rightarrow\phi,\Delta\sslash H$}
\DisplayProof
\\\\
\end{tabular}

\begin{tabular}{lllc}

\AxiomC{$G\sslash\Gamma\Rightarrow\Delta\sslash \Rightarrow \phi$}
\RightLabel{\scriptsize{$\Box \mathrm{R}$}}
\UnaryInfC{$G\sslash\Gamma\Rightarrow \Box\phi,\Delta$}
\DisplayProof

&
\AxiomC{$G \sslash \Gamma\Rightarrow\Delta\sslash \Sigma,\phi\Rightarrow \Lambda \sslash H$}
\RightLabel{\scriptsize{$\Box \mathrm{L}$}}
\UnaryInfC{$G\sslash  \Gamma,\Box\phi\Rightarrow\Delta\sslash\Sigma\Rightarrow \Lambda\sslash H$}
\DisplayProof

&

\AxiomC{$G\sslash \Gamma\Rightarrow \Delta,\phi\sslash H$}
\RightLabel{\scriptsize{$\neg \mathrm{L}$}}
\UnaryInfC{$G\sslash \Gamma,\neg\phi \Rightarrow \Delta\sslash H$}
\DisplayProof

\\\\

\AxiomC{$G\sslash \Gamma,\phi\Rightarrow \Delta\sslash H$}
\RightLabel{\scriptsize{$\neg \mathrm{R}$}}
\UnaryInfC{$G\sslash \Gamma \Rightarrow\neg\phi,\Delta\sslash H$}
\DisplayProof

&

\AxiomC{$G\sslash \Gamma,\phi\Rightarrow \Delta\sslash H$}
\RightLabel{\scriptsize{$\land \mathrm{L_1}$}}
\UnaryInfC{$G\sslash \Gamma,\phi\land\psi \Rightarrow \Delta\sslash H$}
\DisplayProof
&
\AxiomC{$G\sslash \Gamma,\psi\Rightarrow \Delta\sslash H$}
\RightLabel{\scriptsize{$\land \mathrm{L_2}$}}
\UnaryInfC{$G\sslash \Gamma,\phi\land\psi \Rightarrow \Delta\sslash H$}
\DisplayProof
\\\\
\end{tabular}

\begin{tabular}{llccccccccccc}

\AxiomC{$G\sslash \Gamma\Rightarrow \phi,\Delta\sslash H$}
\AxiomC{$G\sslash \Gamma\Rightarrow \psi,\Delta\sslash H$}
\RightLabel{\scriptsize{$\land\mathrm{R}$}}
\BinaryInfC{$G\sslash \Gamma\Rightarrow \phi\land\psi,\Delta\sslash H$}
\DisplayProof

&

\AxiomC{$G\sslash \Gamma\Rightarrow \Delta,\phi\sslash H$}
\RightLabel{\scriptsize{$\lor \mathrm{R_1}$}}
\UnaryInfC{$G\sslash \Gamma,\phi\lor\psi \Rightarrow \Delta\sslash H$}
\DisplayProof

\\\\

\AxiomC{$G\sslash \Gamma, \phi\Rightarrow\Delta\sslash H$}
\AxiomC{$G\sslash \Gamma, \psi\Rightarrow \Delta\sslash H$}
\RightLabel{\scriptsize{$\lor\mathrm{L}$}}
\BinaryInfC{$G\sslash \Gamma,\phi\lor\psi\Rightarrow \Delta\sslash H$}
\DisplayProof

&

\AxiomC{$G\sslash \Gamma\Rightarrow \Delta,\psi\sslash H$}
\RightLabel{\scriptsize{$\lor \mathrm{R_2}$}}
\UnaryInfC{$G\sslash \Gamma,\phi\lor\psi \Rightarrow \Delta\sslash H$}
\DisplayProof

\end{tabular}

\end{figure}
\noindent Additional systems are obtained from $\mathrm{RK}$ by the addition of further structural rules from \Cref{fig:add}, as set out in \Cref{fig:syst}.

\begin{figure}[H]
\caption{Additional Structural Rules}
\label{fig:add}
\begin{tabular}{llll}
\\
 \AxiomC{$G\sslash\Gamma\Rightarrow\Delta\sslash\Gamma\Rightarrow\Delta\sslash H$}
\RightLabel{\scriptsize{$EC$}}
\UnaryInfC{$G\sslash \Gamma\Rightarrow\Delta\sslash H$}
\DisplayProof
&

 \AxiomC{$\Gamma_1\Rightarrow\Delta_1\sslash...\sslash\Gamma_n\Rightarrow\Delta_n$}
\RightLabel{\scriptsize{$Sym$}}
\UnaryInfC{$\Gamma_n\Rightarrow\Delta_n\sslash...\sslash\Gamma_1\Rightarrow\Delta_1$}
\DisplayProof

&
\AxiomC{$G\sslash H$}
\RightLabel{\scriptsize{$EW$}}
\UnaryInfC{$G\sslash \Rightarrow\sslash H$}
\DisplayProof
\\
\end{tabular}
\begin{tabular}{rrrrrrrrrrrrrrrrrrrrrrr}
\\
&&&&&&&
 \AxiomC{$G\sslash\Gamma\Rightarrow\Delta\sslash\Sigma\Rightarrow\Lambda\sslash H$}
\RightLabel{\scriptsize{$EE$}}
\UnaryInfC{$G\sslash\Sigma\Rightarrow\Lambda\sslash\Gamma\Rightarrow\Delta\sslash H$}
\DisplayProof

&

\AxiomC{$G\sslash\Rightarrow$}
\RightLabel{\scriptsize{$Drop$}}
\UnaryInfC{$G$}
\DisplayProof

\end{tabular}

\end{figure}

\begin{figure}[H]
\caption{Hypersequent Systems, Logics and Frame Conditions}
\label{fig:syst}
\begin{tabular}{llll}
\textbf{System} & \textbf{Additional Rules} & \textbf{Intended Logic.} & \textbf{Intended Frame Conditions}\\
$\mathrm{RD}$ & $Drop$ & $\mathbf{D}$ & {Seriality}\\
$\mathrm{RT}$ & $EC$ & $\mathbf{T}$ & {Reflexivity} \\
$\mathrm{RKB}$ & $Sym$ & $\mathbf{KB}$ & {Symmetry}\\
$\mathrm{RK4}$ & $EW$ & $\mathbf{K4}$ & {Transitivity}\\
$\mathrm{RB}$ & $EC$ and $Sym$ & $\mathbf{B}$ & {Reflexivity} and {Symmetry}\\
$\mathrm{RS4}$ & $EC$ and $EW$ & $\mathbf{S4}$ & {Reflexivity} and {Transitivity}\\
$\mathrm{RS5}$ & $EC$, $EW$ and $EE$ & $\mathbf{S5}$ & {Reflexivity}, {Symmetry} and {Transitivity}\\

\end{tabular}
\end{figure}
\noindent To say that a particular hypersequent $H$ has a derivation in a particular system $\mathrm{RX}$, we write $\vdash_{\mathrm{RX}}H$. $\nvdash_{\mathrm{RX}}H$ means that the hypersequent $H$ has no derivation in $\mathrm{RX}$. We write $\vdash_{\mathrm{RX_{CF}}} H$ to say that $H$ has a cut-free derivation in $\mathrm{RX}$. 

Burns and Zach identify Parisi's relational hypersequent systems as ``the first candidates for hypersequent calculi for modal logics that are both modular and conform to Do\v{s}en's principle'' \cite[p.2]{BurnsForthcoming}.\footnote{The systems presented in this paper are strictly speaking Burns and Zach's. Parisi's lack the $EW\mathrm{R}$ rule that is needed for hypersequent completeness and Parisi refers to his systems with the prefix `$\mathrm{H}$' rather than `$\mathrm{R}$'. Burns and Zach, and this paper, also use Lellman's notation from \cite{Lellmann2015}. Disjunction $\lor$ and its corresponding rules have been added to make $C$, the counterexample to $\mathrm{RK4}$ and $\mathrm{RS4}$'s completeness, more perspicuous. This is only needed to make the proof simpler, as disjunction can be defined using conjunction and negation as usual.}  However, as they note, the systems are not completely modular as $\mathrm{RS5}$ is obtained by adding $EE$ to $\mathrm{RS4}$ rather than adding $Sym$. The fact that $\mathrm{RS4}$ and $\mathrm{RB}$ turn out to be cut-free hypersequent incomplete is plausibly connected to this lack of modularity.\footnote{Interestingly, this lack of modularity in the move from $\mathbf{S4}$ to $\mathbf{S5}$ is shared by Poggiolesi's tree hypersequent systems \cite[p.125-6]{Poggiolesi2011}.} 

\end{definition}

\subsection{Completeness}
\label{sec:c}

Parisi's systems are sound relative to standard Kripke frames, in the sense that whenever a hypersequent is provable, there is no counterexample \cite{ParisiForthcoming}. The converse of this, completeness, is the focus of the paper.  

\begin{definition}[Completeness]
A hypersequent calculus $\mathrm{RX}$ is \emph{Y-Complete} relative to a class of frames $\mathbb{S}$ iff whenever $\vDash_{\mathbb{S}}Y$ then $\vdash_{\mathrm{RX}}Y$. When $Y$ stands in for: arbitrary hypersequents we say that $\mathrm{RX}$ is \emph{Hypersequent-Complete} (H-Complete); hypersequents of the form $\Gamma\Rightarrow\Delta$ that $\mathrm{RX}$ is \emph{Sequent-Complete} (S-Complete); and hypersequents of the form $\Rightarrow\phi$ that $\mathrm{RX}$ is \emph{Formula-Complete} (F-Complete).\\
A hypersequent calculus $\mathrm{RX}$ is \emph{Cut-Free Y-Complete} (CF Y-Complete) relative to a class of frames $\mathbb{S}$ iff whenever $\vDash_{\mathbb{S}}Y$ then $\vdash_{\mathrm{RX_{CF}}}Y$.

\end{definition} 

\begin{figure}[h]
\caption{State of Play}
\begin{small}
\begin{tabular}{cccccc}
\textbf{System}  &  \textbf{S-Com} & \textbf{ H-Com} & \textbf{CF F-Com} & \textbf{CF S-Com} & \textbf{CF H-Com}\\
RK  & Y (P)  & Y (B\&Z) & Y (P) & Y (P)  & Y (B\&Z) \\
RD  & Y (P) & Y (B\&Z) & Y (P) &Y (P)  & Y (B\&Z) \\
RT   & Y (P) & Y (B\&Z) & Y (P) &Y (P)  & Y (B\&Z) \\
RKB   & Y (P)  &? & ? & ? & \textbf{\emph{N}}\\
RB   & Y (P)  &? & ? & ? & \textbf{\emph{N}}\\
RK4  & Y (P)  &\textbf{\emph{Y}} & \textbf{\emph{N}} & \textbf{\emph{N}} & \textbf{\emph{N}}\\
RS4  & Y (P)  &\textbf{\emph{Y}} & \textbf{\emph{N}} & \textbf{\emph{N}} & \textbf{\emph{N}}\\
RS5  & Y (P) & Y (P)  & Y (P) & Y (P)  & Y(P)
\end{tabular}
\end{small}
\label{fig:SP}

\end{figure}

\Cref{fig:SP} contains the current state of play when it comes to completeness results for Parisi's relational hypersequent systems for modal logic, leaving out Formula-Completeness as a distinct category.
Parisi has proved sequent-completeness for all his systems, in some cases cut-free and in others only for the system with $\mathrm{Cut}$ as a basic rule.\footnote{Parisi's proves sequent completeness for $\mathrm{RK}$, $\mathrm{RD}$ and $\mathrm{RT}$ without using $\mathrm{Cut}$ (hence cut-free). The proof of sequent completeness of $\mathrm{RS5}$ does use $\mathrm{Cut}$, as does that for $\mathrm{RKB}$ and $\mathrm{RS4}$. These proofs work by showing that his systems are sequent equivalent to a sequent system for $\mathbf{K}$, $\mathbf{D}$, $\mathbf{T}$ and $\mathbf{S5}$ respectively, which are already known to be sequent complete.
For $\mathrm{RS5}$, he then shows that $\mathrm{Cut}$ is an admissible rule in $\mathrm{RS5}$ by showing that it is hypersequent equivalent to a cut-free $\mathbf{S5}$ hypersequent system of Restall's. Cut-free sequent completeness then follows from cut-free hypersequent completeness.  } Burns and Zach have shown direct cut-free hypersequent completeness for $\mathrm{RK}$, $\mathrm{RD}$ and $\mathrm{RT}$ (from this it follows that these systems are cut-free sequent complete and hypersequent complete). The cells marked \textbf{\emph{N}} are answered in the negative in the current paper: $\mathrm{RK4}$, $\mathrm{RS4}$, $\mathrm{RKB}$ and $\mathrm{RB}$ are all cut-free hypersequent incomplete, and the former two are also cut-free sequent and formula incomplete. Sequent and formula completeness remains open for $\mathrm{RKB}$ and $\mathrm{RB}$. The cells marked \textbf{\emph{Y}} are answered positively in Appendix A: $\mathrm{RK4}$ and $\mathrm{RS4}$ with $\mathrm{Cut}$ as a basic rule are not only sequent complete, but also hypersequent complete. 

\section{RK4 and RS4 Cut-Free Incompleteness}
\label{sec:RS4Incompleteness}

In this section we prove that $\mathrm{RK4}$ and $\mathrm{RS4}$ are cut-free hypersequent, sequent and formula incomplete. The outline of the proof is as follows: first, we identify a hypersequent $C$ of the form $\Rightarrow\phi$ that is $\mathbf{K4}$ and $\mathbf{S4}$ valid; second; we define a class of models, $\mathrm{PS4}$ models, relative to which $C$ is invalid; third, we show that cut-free $\mathrm{RK4}$ and $\mathrm{RS4}$ are sound relative to $\mathrm{PS4}$ models, resulting in both of $\mathrm{RK4}$ and $\mathrm{RS4}$ being cut-free formula incomplete; fourth, as an immediate consequence, cut-free $\mathrm{RK4}$ and $\mathrm{RS4}$ are both also sequent and hypersequent incomplete. 

To begin the proof, we show that $C= \Rightarrow \neg\Box\neg\Box(p\land q)\lor\Box(\neg\Box p\lor\Box\neg\Box q)$ is $\mathbf{K4}$ and $\mathbf{S4}$ valid.

\theoremstyle{lemma}
\begin{lemma}
\label{lem:K4valid}

$\vDash_{\mathbf{K4}} C= \Rightarrow{\neg\Box}{\neg\Box}(p\land q)\lor\Box({\neg\Box} p\lor{\Box\neg}{\Box} q)$

\end{lemma}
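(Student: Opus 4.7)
The plan is to argue semantically: assume for contradiction that some transitive Kripke model $\mathfrak{M}=\langle W,R,v\rangle$ has a world $w$ at which $C$ fails. Since $C$ is a disjunction, this means both $v(\neg\Box\neg\Box(p\land q),w)=0$ and $v(\Box(\neg\Box p\lor\Box\neg\Box q),w)=0$; equivalently, $v(\Box\neg\Box(p\land q),w)=1$ while there is a witness in the successors of $w$ where $\neg\Box p\lor\Box\neg\Box q$ fails. The proof then consists of extracting successor worlds witnessing the existential modal claims and using transitivity twice to force a contradiction.

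Concretely, first I would pick $x$ with $wRx$ such that $v(\neg\Box p\lor\Box\neg\Box q,x)=0$, giving $v(\Box p,x)=1$ and $v(\Box\neg\Box q,x)=0$. From the latter, pick $y$ with $xRy$ and $v(\Box q,y)=1$. The first use of transitivity comes here: $wRx$ and $xRy$ yield $wRy$, so the assumption $v(\Box\neg\Box(p\land q),w)=1$ forces $v(\Box(p\land q),y)=0$, and I pick $z$ with $yRz$ and $v(p\land q,z)=0$.

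The second use of transitivity closes the argument: $xRy$ and $yRz$ give $xRz$, so $v(\Box p,x)=1$ yields $v(p,z)=1$; and $v(\Box q,y)=1$ with $yRz$ yields $v(q,z)=1$. Together these give $v(p\land q,z)=1$, contradicting our choice of $z$. Hence no such countermodel exists and $\vDash_{\mathbf{K4}} C$.

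There is no real obstacle here; the only thing to keep track of is that transitivity must be invoked in both places, $wRx$-then-$xRy$ to locate the offending $y$ in $R(w)$, and $xRy$-then-$yRz$ to transport $\Box p$ from $x$ down to $z$. This double use of transitivity is what makes $C$ a $\mathbf{K4}$-validity (and \emph{a fortiori} an $\mathbf{S4}$-validity, since every reflexive transitive frame is transitive), and it is exactly this double nesting that will later make $C$ underivable cut-free in $\mathrm{RK4}$ and $\mathrm{RS4}$.
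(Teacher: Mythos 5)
Your proof is correct and is essentially the paper's own argument: the same \emph{reductio}, with your worlds $w,x,y,z$ playing the roles of the paper's $i,j,k,m$, and the same two invocations of transitivity (first to put the $\Box q$-world in $R(w)$, then to transport $\Box p$ down to the final witness). No gaps.
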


\begin{proof}
The proof is a \textit{reductio} of the assumption that there is a countermodel. 
\begin{figure}[h]
\caption{No $\mathbf{K4}$ Countermodel to $C$}

\flushleft
\begin{tikzpicture}[modal,world/.append style={minimum size=1.2cm}]

\node[world] (i) [label=below:{$i$}] {${\Box\neg\Box}(p\land q)$};

\node[world] (j) [label=below:{$j$},right=of i] {$\Box p$} ;

\node[world] (k) [label=below:{$k$},right=of j] {$\Box q$} ;

\node[world] (m) [label=below:{$m$},right=of k] {$\neg(p\land q)$,
$p\land q$};

\path[->] (i) edge (j); \path[->] (j) edge (k); \path[->] (k) edge (m);
\path[->] (i) edge[bend left=60] (k);
\path[->] (j) edge[bend left=60] (m);

\end{tikzpicture}

\end{figure}
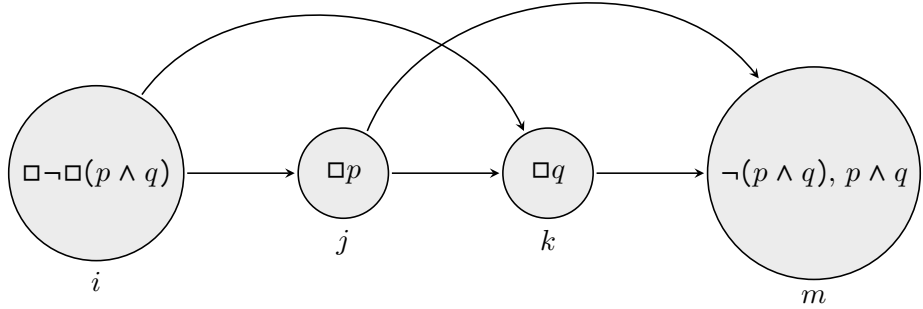
Once the $\lor$s and $\Box$s have been decomposed, for there to be a countermodel, we need three points, call them $i$, $j$ and $k$, where $v(\Box\neg\Box(p\land q),i)=1=v(\Box p,j)=v(\Box q,k)=1$, $iRj$ and $jRk$. It follows that $iRk$ and $v({\neg\Box}(p\land q),k)=1$. So there must be a point $m$, where $kRm$, $v(\neg(p\land q),m)=1=v(q,m)$. But then $jRm$ also, and $v(p,m)=1=v(p\land q,m)\neq 0=v(p\land q,m)$. 

\end{proof}

\theoremstyle{lemma}
\begin{lemma}
\label{lem:S4valid}
$\vDash_{\mathbf{S4}} C= \Rightarrow {{\neg\Box}\neg\Box}(p\land q)\lor\Box({\neg\Box} p\lor{\Box\neg\Box} q)$

\end{lemma}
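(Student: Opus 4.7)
The plan is to derive this lemma as an immediate corollary of \Cref{lem:K4valid}. The class of $\mathbf{S4}$ frames (reflexive and transitive Kripke frames) is a proper subclass of the class of $\mathbf{K4}$ frames (transitive Kripke frames), since adding reflexivity only restricts further. Hence any formula valid on all $\mathbf{K4}$ frames is \emph{a fortiori} valid on all $\mathbf{S4}$ frames, so applying \Cref{lem:K4valid} directly gives $\vDash_{\mathbf{S4}} C$.

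First I would state the frame-class inclusion $\mathbf{S4}\subseteq\mathbf{K4}$ explicitly, appealing to the intended frame conditions laid out in \Cref{fig:syst}. Then I would note that validity on a class of frames is monotone in the reverse direction: if $\vDash_{\mathbb{S}} H$ and $\mathbb{S}'\subseteq\mathbb{S}$, then $\vDash_{\mathbb{S}'} H$, because any countermodel on $\mathbb{S}'$ would also be a countermodel on $\mathbb{S}$. Combining these two observations with the preceding lemma yields the result.

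There is really no obstacle here; the lemma is essentially a bookkeeping step to record that the same counterexample formula $C$ works against both $\mathbf{K4}$ and $\mathbf{S4}$ completeness. The only alternative would be to rerun the \emph{reductio} from \Cref{lem:K4valid} with the extra reflexivity assumption available, but since reflexivity is not used anywhere in that argument (only transitivity is invoked, when concluding $iRk$ and $jRm$), repeating the argument would add no content. I would therefore present the proof as a one-line observation citing \Cref{lem:K4valid} and the inclusion of frame classes.
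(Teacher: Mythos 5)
Your proposal is correct and is essentially identical to the paper's own proof, which likewise observes that every $\mathbf{S4}$ model is a $\mathbf{K4}$ model and so the absence of a $\mathbf{K4}$ countermodel from \Cref{lem:K4valid} immediately rules out an $\mathbf{S4}$ countermodel. Nothing further is needed.
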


\begin{proof}
Every $\mathbf{S4}$ model is also a $\mathbf{K4}$ model. So, if there were an $\mathbf{S4}$ countermodel to $C$, there would be a $\mathbf{K4}$ countermodel. From  \Cref{lem:K4valid}, there is no $\mathbf{K4}$ countermodel. Hence, there isn't an $\mathbf{S4}$ one either. 
\end{proof}

We next define a new class of models, $\mathrm{PS4}$ models; show that both $\mathrm{RK4}$ and $\mathrm{RS4}$ are sound relative to these models; and that there is a $\mathrm{PS4}$ model that is a countermodel to $C$. 

\theoremstyle{definition}
\begin{definition}[$\mathrm{PS4}$ Frames]
\label{def:PS4F}

A Pseudo $\mathrm{S4}$ ($\mathrm{PS4}$) frame is a triple $\langle W, R, S\rangle$ where $W$ is a non-empty set of points and both $R$ and $S$ are binary relations on W.\\ We set the following restrictions on $R$ and $S$, displayed in \Cref{Fig:RS}: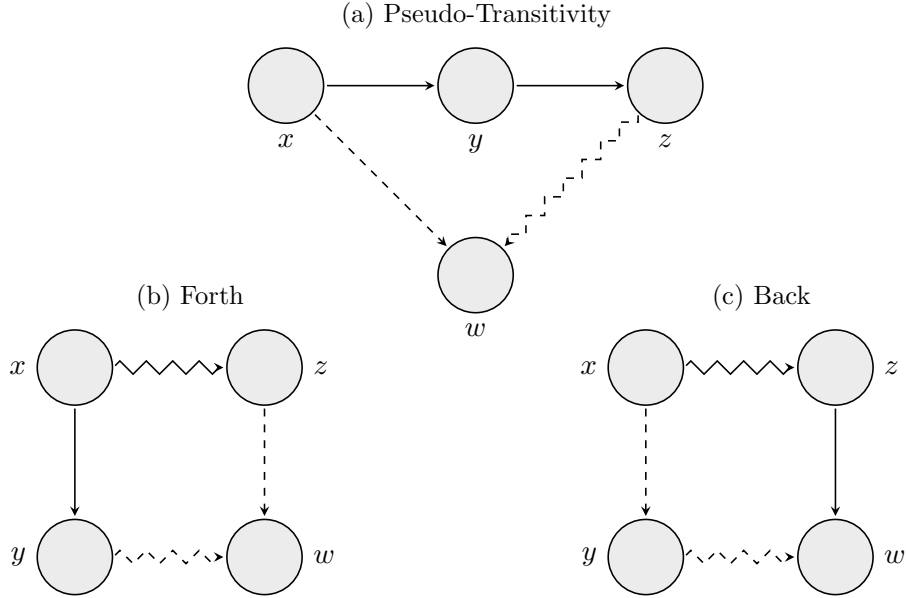
\begin{figure}
\caption{Additional Restrictions on $R$ (straight lines) and $S$ (zigzag lines)}
\label{Fig:RS}
\begin{subfigure}[c][4cm]{\textwidth}
\caption{Pseudo-Transitivity}
\centering
\begin{tikzpicture}[modal]

\node[world] (x) [label=below: {$x$}] {};

\node[world] (y) [label=below: {$y$},right=of x] {};

\node[world] (z) [label=below: {$z$},right=of y] {};

\node[world] (w) [label=below: {$w$},below=of y] {};

\path[->] (x) edge (y); \path[->] (y) edge (z); 

\path[->] (x) edge[dashed] (w);

\path[->] (z) edge[zigzag,dashed] (w);
\end{tikzpicture}
\end{subfigure}

\begin{subfigure}[c][3cm]{0.4\textwidth}
\caption{Forth}
\begin{tikzpicture}[modal]

\node[world] (x) [label=left: {$x$}] {};

\node[world] (y) [label=left: {$y$},below=of x] {};

\node[world] (z) [label=right: {$z$},right=of x] {};

\node[world] (w) [label=right: {$w$},below=of z] {};

\path[->] (x) edge (y); 
\path[->] (z) edge[dashed] (w); 

\path[->] (x) edge[zigzag] (z);

\path[->] (y) edge[zigzag,dashed] (w);
\end{tikzpicture}
\end{subfigure}
\hfill
\begin{subfigure}[c][3cm]{0.4\textwidth}
\caption{Back}
\begin{tikzpicture}[modal]

\node[world] (x) [label=left: {$x$}] {};

\node[world] (y) [label=left: {$y$},below=of x] {};

\node[world] (z) [label=right: {$z$},right=of x] {};

\node[world] (w) [label=right: {$w$},below=of z] {};

\path[->] (x) edge[dashed] (y); 
\path[->] (z) edge (w); 

\path[->] (x) edge[zigzag] (z);

\path[->] (y) edge[zigzag,dashed] (w);

\end{tikzpicture}
\end{subfigure}

\end{figure}
\begin{enumerate}
	\item \textit{S Reflexivity}: For all points $x$, $xSx$;
	\item \textit{R Reflexivity}: For all points $x$, $xRx$;

	\item	\textit{Pseudo-Transitivity}: For all points $x$, $y$, and $z$: if $xRy$ and $yRz$, then there is a $w$: $xRw$ and $zSw$;

	\item \textit{Forth}: For all points $x$, $y$, and $z$: if $xRy$ and $xSz$, then there is a point $w$: $zRw$ and $ySw$; 

	\item \textit{Back}: For all points $x$, $z$, and $w$: if $xSz$ and $zRw$, then there is a point $y$: $xRy$ and $ySw$.\footnote{Forth and Back are standard bisimulation conditions, sometimes knows as Zig and Zag respectively. See \cite[\S2.2]{Blackburn2001} for an accessible introduction to bisimulation in modal logic.} 
\end{enumerate}
\end{definition}

Having defined $\mathrm{PS4}$ frames, we will use the following definition of an information order on points in a model to then define $\mathrm{PS4}$ models in \Cref{def:PS4M}. 

\theoremstyle{definition}
\begin{definition}[Information Order]
	\label{def:Inf}
	Given two points in a model $x$ and $y$,  $x\sqsubseteq y$ iff for all atomics $p$, if $v(x,p)\in\{1,0\}$, then $v(y,p)=v(x,p)$. When $x	\sqsubseteq y$ we say that $x$ is earlier than $y$ in the information order.
\end{definition}

\theoremstyle{definition}
\begin{definition}[$\mathrm{PS4}$ Models]
\label{def:PS4M}

A Pseudo $\mathrm{S4}$ ($\mathrm{PS4}$) model is a quadruple $\langle W, R, S, v\rangle$ where $\langle W, R, S\rangle$ is a $\mathrm{PS4}$ frame and $v$ is a 
valuation function from pairs of a point and a formula to $\langle 1, *, 0\rangle$. We set the following restrictions 
on $v$:
\begin{enumerate}
	\item Strong Kleene: $v$ uses standard modal Strong Kleene truth conditions:
		\begin{itemize}
			\item [$\neg 1$:] $v(\neg\phi,x)=1$ iff $v(\phi,x)=0$;
			\item [$\neg 0$:] $v(\neg\phi,x)=0$ iff $v(\phi,x)=1$;
			\item [$\land 1$:] $v(\phi\land\psi,x)=1$ iff $v(\phi,x)=1$ and $v(x,\psi)=1$;
			\item [$\land 0$:] $v(\phi\land\psi,x)=0$ iff $v(\phi,x)=0$ or $v(x,\psi)=0$;
	             \item [$\lor 1$:] $v(\phi\land\psi,x)=1$ iff $v(\phi,x)=1$ or $v(\psi,x)=1$;
			\item [$\lor 0$:] $v(\phi\land\psi,x)=0$ iff $v(\phi,x)=0$ and $v(\psi,x)=0$;
			\item [$\Box 1$:] $v(\Box\phi,x)=1$ iff for all $y$: if $xRy$ then $v(\phi,y)=1$;
			\item [$\Box 0$:] $v(\Box\phi,x)=0$ iff there is a $y$: $xRy$ and $v(\phi,y)=0$.
		\end{itemize}	
Note that these match ``classical'' Kripke models from \Cref{def:Kframe} for 1 and 0, but leave a ``gap'' for *. 

	\item \textit{S Information Preservation} ($S_\sqsubseteq$): For all points $x$, and $y$ : if $xSy$, then $x \sqsubseteq y $.

\end{enumerate}
\end{definition}

The $S$ Information Preservation condition in \Cref{def:PS4M} means that $S$ preserves the information order. We now show that $S$ preserves the truth and falsity of formulae in general, rather than just atomics. This fact will be used in the proof of the soundness of $\mathrm{RK4}$ and $\mathrm{RS4}$ relative to $\mathrm{PS4}$ models. 

\theoremstyle{lemma}
\begin{lemma}
\label{lem:InfoPre}

If $x S y $, then for all formulae $\phi$, if $v(\phi,x)\in \{1,0\}$ then $v(\phi,y)=v(\phi,x)$

\end{lemma}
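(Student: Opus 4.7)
The plan is to prove this by induction on the complexity of $\phi$, with the modal case being the substantive one.

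For the base case, $\phi$ is atomic. If $v(\phi, x) \in \{1,0\}$, then by $S_\sqsubseteq$ we have $x \sqsubseteq y$, and the definition of the information order gives $v(\phi, y) = v(\phi, x)$. The propositional cases ($\neg$, $\land$, $\lor$) are routine: in each case the Strong Kleene clauses assign a value in $\{1, 0\}$ to a complex formula only by referring to values in $\{1, 0\}$ of its immediate subformulae at the same point $x$, so the induction hypothesis carries each such commitment over to $y$, and the corresponding clause then reassembles the same value at $y$. For example, if $v(\phi \land \psi, x) = 0$, then $v(\phi, x) = 0$ or $v(\psi, x) = 0$; the IH propagates whichever disjunct holds to $y$, so $v(\phi \land \psi, y) = 0$ by $\land 0$.

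The interesting case is $\phi = \Box\psi$, where I expect the work to be; this is where the \emph{Forth} and \emph{Back} conditions earn their keep, in exactly the bisimulation-style fashion the footnote in \Cref{def:PS4F} flags. Suppose $v(\Box\psi, x) = 1$ and $xSy$; I need $v(\Box\psi, y) = 1$. Pick any $z$ with $yRz$. Applying \emph{Back} to $xSy$ and $yRz$ yields a $y'$ with $xRy'$ and $y'Sz$. Since $xRy'$ and $v(\Box\psi, x) = 1$, we get $v(\psi, y') = 1$, and then the IH applied along $y'Sz$ gives $v(\psi, z) = 1$. As $z$ was arbitrary, $v(\Box\psi, y) = 1$. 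Dually, if $v(\Box\psi, x) = 0$, there exists $z$ with $xRz$ and $v(\psi, z) = 0$; applying \emph{Forth} to $xRz$ and $xSy$ produces a $w$ with $yRw$ and $zSw$, the IH along $zSw$ yields $v(\psi, w) = 0$, and so $v(\Box\psi, y) = 0$.

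The main obstacle is bookkeeping in the $\Box$ case: one must invoke \emph{Back} for the value-$1$ subcase and \emph{Forth} for the value-$0$ subcase, and confirm that the resulting intermediate point lies in the correct relation to apply the induction hypothesis. Once the variable-matching is done carefully, the rest is mechanical.
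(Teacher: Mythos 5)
Your proof is correct and follows essentially the same route as the paper's: induction on complexity, with the base case handled by $S_\sqsubseteq$, the extensional connectives by direct appeal to the Strong Kleene clauses and the induction hypothesis, and the $\Box$ case using \emph{Back} for the value-$1$ subcase and \emph{Forth} for the value-$0$ subcase, in each instance producing an $S$-related intermediate point to which the induction hypothesis applies. The variable bookkeeping matches the conditions as stated in Definition \ref{def:PS4F}, so nothing further is needed.
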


\begin{proof}

We prove this by induction on the complexity of $\phi$. For atomic sentences, it follows immediately from the $S$ preservation condition on $v$.\\
The extensional connectives are fairly simple.\footnote{See \cite[\S1.2, \S6.2]{Blamey2002} and \cite[p.49]{Muskens1995} for discussion.} We work through conjunction, leaving negation and disjunction to the reader. We have two subcases. Let $\phi=\psi\land\delta$:
\begin{enumerate}
\item $v(\psi\land\delta,x)=1$. It follows that $v(\psi,x)=1=v(\delta,x)$. So, by the induction hypothesis $v(\psi,y)=1=v(\delta,y)$. Hence, $v(\psi\land\delta,y)=1$ also.
\item $v(\psi\land\delta,x)=0$. It follows that either $v(\psi,x)=0$ or $v(\delta,x)=0$. So, by the induction hypothesis either $v(\psi,y)=0$ or $v(\delta,y)=0$ respectively. Hence, $v(\psi\land\delta,y)=0$ also.
\end{enumerate}
Necessity $\Box$ is the trickier case and here the bisimulation conditions play a role. We have two subcases. Let $\phi=\Box\psi$:
\begin{enumerate}
\item $v(\Box\psi,x)=1$. By assumption $xSy$. We need to show $v(\Box\psi,y)=1$. For this, we need to first show that for any $z$, if $yRz$ then $v(\psi,z)=1$. Suppose there is some such $z$. By $Back$, it follows that there is a $w$ such that $xRw$ and $wSz$. $v(\psi,w)=1$ and therefore by the induction hypothesis, $v(\psi,z)=1$ also. Hence, $v(\Box\psi,y)=1$. The condition holds.
\item $v(\Box\psi,x)=0$. Therefore, there is a $w$ such that $xRw$ and $v(\psi,w)=0$. By assumption $xSy$. We need to show $v(\Box\psi,y) = 0$. For this, we need to show that there is $z$ where $yRz$ and $v(\psi,z)=0$. By $Forth$, there is a $z$ where $yRz$ and $wSz$. It follows from the induction hypothesis that $v(\psi,z)=0$. Hence, $v(\Box\psi,y)=0$. The condition holds. 
\end{enumerate}

\end{proof}

The following lemma will also be used in the proof of $\mathrm{RK4}$ and $\mathrm{RS4}$'s soundness relative to $\mathrm{PS4}$ models, specifically for $EW$. 

\theoremstyle{lemma}
\begin{lemma}
\label{lem:branchpres}

If in a model there is a branch of points $w_1,...,w_{i-1},w_i,w_{i+1}...,w_n$ then in the same model there is a branch $w_1,...,w_{i-1},w_{i+1}',...,w_n'$, where for all $j$, $i+1\leq j \leq n, w_jSw_j'$.

\end{lemma}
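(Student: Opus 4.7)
The plan is to define the new points $w_{i+1}', \ldots, w_n'$ recursively, using Pseudo-Transitivity once to ``splice over'' $w_i$ and then Forth repeatedly to propagate a parallel branch $S$-related to the tail of the original one. The two bisimulation-style frame conditions on $\mathrm{PS4}$ frames look tailor-made for exactly this kind of branch-splicing argument.

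For the substantive case $2 \le i \le n-1$, I would first apply Pseudo-Transitivity to the two links $w_{i-1}Rw_i$ and $w_iRw_{i+1}$ already present in the original branch. This delivers a point $w_{i+1}'$ with $w_{i-1}Rw_{i+1}'$ and $w_{i+1}Sw_{i+1}'$: the first fact attaches the new branch onto $w_{i-1}$, and the second gives the required $S$-correspondence at position $i+1$.

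Next, by induction on $k$ for $i+1 \le k < n$, once $w_k'$ has been defined with $w_k S w_k'$, I would apply Forth with $x = w_k$, $y = w_{k+1}$, $z = w_k'$, using the original link $w_k R w_{k+1}$ and the inductive hypothesis $w_k S w_k'$. This produces $w_{k+1}'$ satisfying $w_k' R w_{k+1}'$ and $w_{k+1} S w_{k+1}'$, extending the parallel branch by one step while preserving the $S$-correspondence. Stitching these together yields the desired branch $w_1, \ldots, w_{i-1}, w_{i+1}', \ldots, w_n'$ with $w_j S w_j'$ for every $j \in \{i+1, \ldots, n\}$.

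The edge cases are routine: if $i=1$ there is no $w_{i-1}$ to connect to, and we can simply take $w_j' = w_j$ for $j \ge 2$, relying on $S$ reflexivity; if $i = n$, the new ``branch'' is just $w_1, \ldots, w_{n-1}$, which is already a sub-branch, and the $S$-clause is vacuously satisfied. I don't foresee a genuine obstacle in any of this; the only thing to get right is which bisimulation condition fires at which step --- Pseudo-Transitivity at the base, to skip $w_i$, and Forth thereafter, to march alongside the remainder of the original branch.
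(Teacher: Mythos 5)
Your proposal is correct and follows essentially the same route as the paper: one application of Pseudo-Transitivity to $w_{i-1}Rw_i$ and $w_iRw_{i+1}$ to obtain $w_{i+1}'$, followed by repeated applications of Forth (with exactly the instantiation $x=w_k$, $y=w_{k+1}$, $z=w_k'$) to propagate the parallel $S$-related branch. Your explicit treatment of the $i=1$ and $i=n$ edge cases via $S$-reflexivity is a small addition the paper leaves implicit, but the core argument is identical.
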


\begin{proof}
Suppose there is a branch $w_1,...,w_{i-1},w_i,w_{i+1}...,w_n$. By Pseudo-Transitivity there must be a point $w_{i+1}'$ such that $w_{i-1}Rw_{i+1}'$ and $w_{i+1}Sw_{i+1}'$.  By $Forth$, for all $j$, $i+2\leq j\leq n$ such that $w_{j-1}Rw_j$ and $w_{j-1}Sw_{j-1}'$, then there is a $w_j'$ such that $w_{j-1}'Rw_j'$ and $w_jSw_j'$. $i+1$ iterations of this consequence of $Forth$ will result in the desired branch. 
\end{proof}

What \Cref{lem:branchpres} tells us is that whenever Pseudo-Transitivity requires that we make an informational ``copy'' of a point $z$, we also make an informational ``copy'' of each branch of points from $z$ onwards. This is will be essential for the soundness of the $EW$ rule in the following \cref{lem:PS4sound}.

\theoremstyle{lemma}
\begin{lemma}
\label{lem:PS4sound}

If $\vdash_{RS4_{CF}} H$ then $\vDash_{PS4} H$

\end{lemma}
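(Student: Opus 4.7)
The plan is to induct on the height of the cut-free $\mathrm{RS4}$ derivation, arguing contrapositively: for each rule we show that any $\mathrm{PS4}$ countermodel to the conclusion yields a $\mathrm{PS4}$ countermodel to (at least one of) the premises. The base case $\mathrm{Id}$ is immediate, since $p \Rightarrow p$ has no countermodel — a countermodel would require $v(p,w) = 1$ and $v(p,w) = 0$ simultaneously at the same point.

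For the propositional rules ($\neg\mathrm{L}$, $\neg\mathrm{R}$, $\land\mathrm{L}_{1,2}$, $\land\mathrm{R}$, $\lor\mathrm{L}$, $\lor\mathrm{R}_{1,2}$) and the thinning rules $T\mathrm{L}$, $T\mathrm{R}$, the argument is routine: the biconditional form of the Strong Kleene clauses in \Cref{def:PS4M} lets us decompose the truth or falsity of a compound at a point into the corresponding truth values of its components, exactly as in the classical case. For $EW\mathrm{L}$ and $EW\mathrm{R}$ we simply trim the first or last point off the witnessing branch, since the empty sequent is vacuously countermodeled at any point and a contiguous subsequence of a branch is still a branch. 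The modal rules use the $\Box$ clauses directly: $\Box\mathrm{R}$ exploits the $\Box 0$ clause to extend the branch by an $R$-successor falsifying $\phi$, while $\Box\mathrm{L}$ uses the $\Box 1$ clause to observe that the next point of the branch — which is by assumption $R$-accessible from its predecessor — must satisfy $\phi$ whenever $\Box\phi$ holds at that predecessor.

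The two novel cases are $EC$ and $EW$. For $EC$, a countermodel to the conclusion $G \sslash \Gamma \Rightarrow \Delta \sslash H$ whose witnessing branch contains a point $w_k$ countermodeling $\Gamma \Rightarrow \Delta$ can be lifted to a countermodel of the premise $G \sslash \Gamma \Rightarrow \Delta \sslash \Gamma \Rightarrow \Delta \sslash H$ simply by duplicating $w_k$ in the branch; this remains a branch because $R$-reflexivity gives $w_k R w_k$. For $EW$ (from $G \sslash H$ infer $G \sslash \Rightarrow \sslash H$), a countermodel to the conclusion supplies a branch $u_1,\ldots,u_{|G|},v,w_1,\ldots,w_{|H|}$ in which $v$ countermodels the empty sequent trivially. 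Applying \Cref{lem:branchpres} with the middle index set to $|G|+1$ removes $v$ and replaces each subsequent $w_j$ with an $S$-copy $w_j'$, using Pseudo-Transitivity to bridge $u_{|G|}$ to $w_1'$ and Forth to propagate $S$-copies along the rest of the branch. \Cref{lem:InfoPre} then ensures each $w_j'$ countermodels the same sequent as $w_j$, so the shortened branch $u_1,\ldots,u_{|G|},w_1',\ldots,w_{|H|}'$ witnesses a countermodel to the premise.

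The main obstacle is the $EW$ case: this is precisely the rule that standard $\mathbf{S4}$ models fail to validate, which is the whole motivation for the $\mathrm{PS4}$ machinery. Once the bisimulation-style apparatus — the $S$ relation, Pseudo-Transitivity, Forth, and $S$-information preservation — has done its work through \Cref{lem:InfoPre} and \Cref{lem:branchpres}, the remaining cases are routine, with only $R$-reflexivity needed beyond the $\mathrm{RK}$ arguments, and the induction goes through.
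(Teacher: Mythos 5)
Your proposal is correct and follows essentially the same route as the paper's proof: an induction on the length of cut-free derivations showing that countermodels to conclusions lift to countermodels of premises, with the only genuinely novel case being $EW$, handled exactly as the paper does via \Cref{lem:branchpres} (Pseudo-Transitivity plus Forth) and \Cref{lem:InfoPre}. Your treatment of the routine cases, $EC$ via $R$-reflexivity, and the $\Box$ rules via the $\Box 1$/$\Box 0$ clauses matches what the paper leaves implicit.
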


\begin{proof}

The proof proceeds by induction on the length of derivations. Much of this proof is routine. We only explicitly consider $EW$. The rest are the same as in Parisi \cite{ParisiForthcoming}. 

\begin{centering}
\begin{tabular}{c}

\AxiomC{$\vdots$}
\noLine
\UnaryInfC{$G\sslash H$}
\RightLabel{\scriptsize{$EW$}}
\UnaryInfC{$G\sslash \Rightarrow\sslash H$}
\DisplayProof\\\\
\end{tabular}\\
\end{centering}
\noindent Suppose there was a countermodel to the endhypersequent. This would be a model with a branch $w_1,...,w_{i-1},w_i,w_{i+1},...,w_n$, where $w_i$ countermodels $\Rightarrow$. By \Cref{lem:branchpres} there is a branch  $w_1,...,w_{i-1},w_{i+1}',...,w_n'$, where for all $j$, $i+1\leq j \leq n, w_jSw_j'$. By \Cref{lem:InfoPre}, for all $j, i+1\leq j \leq n$, for all $\phi$,  if $v(\phi,w_j)\in \{1,0\}$ then $v(\phi,w_j')=v(\phi,w_j)$. So, the branch $w_1,...,w_{i-1},w_{i+1}',...,w_n'$ is a countermodel to the premise hypersequent.
\end{proof}

\begin{lemma}
\label{lem:PK4sound}
If $\vdash_{\mathrm{RK4_{CF}}} H$ then $\vDash_{\mathrm{PS4}} H$

\end{lemma}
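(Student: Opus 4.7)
The plan is to observe that the rule set of $\mathrm{RK4}$ is a subset of the rule set of $\mathrm{RS4}$: both systems extend $\mathrm{RK}$, the former by $EW$ alone, the latter by $EW$ together with $EC$. In particular, every cut-free derivation in $\mathrm{RK4}$ is automatically a cut-free derivation in $\mathrm{RS4}$. So from $\vdash_{\mathrm{RK4_{CF}}}H$ we get $\vdash_{\mathrm{RS4_{CF}}}H$, and \Cref{lem:PS4sound} then delivers $\vDash_{\mathrm{PS4}}H$. The entire proof can therefore be written in one or two lines as an immediate corollary.

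If a direct argument is preferred over an appeal to the stronger system, I would simply re-run the induction on derivation length from \Cref{lem:PS4sound}. The axiom case and the rules of $\mathrm{RK}$ (the logical rules, the thinning rules $T\mathrm{L}$, $T\mathrm{R}$, and the empty weakenings $EW\mathrm{L}$, $EW\mathrm{R}$) are handled exactly as in Parisi \cite{ParisiForthcoming}: the clauses of a $\mathrm{PS4}$ valuation restricted to $\{1,0\}$ coincide with the classical Kripke clauses of \Cref{def:Kframe}, and the soundness arguments for those rules only inspect the relation $R$ and those classical clauses, so they transfer verbatim. The only genuinely modal structural rule that appears in $\mathrm{RK4}$ is $EW$, and that case was already discharged in the proof of \Cref{lem:PS4sound} via \Cref{lem:branchpres} and \Cref{lem:InfoPre}, neither of which makes any use of $EC$.

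There is no real obstacle here. The genuine conceptual work, namely building a $\mathrm{PS4}$ frame in which $S$ is used to ``copy'' a point and its forward branch when $EW$ is applied, has already been done for $\mathrm{RS4}$; the $EC$ case, which would be the only extra case for $\mathrm{RS4}$ over $\mathrm{RK4}$, is precisely the case that does \emph{not} arise for $\mathrm{RK4}$. Hence \Cref{lem:PK4sound} is strictly a corollary of \Cref{lem:PS4sound}, and I would present it as such, perhaps noting in a sentence that this is what makes the incompleteness argument in \Cref{sec:RS4Incompleteness} apply uniformly to both $\mathrm{RK4}$ and $\mathrm{RS4}$.
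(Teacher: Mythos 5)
Your proposal is correct and matches the paper's own argument, which likewise observes that $\mathrm{RK4}$ differs from $\mathrm{RS4}$ only in lacking $EC$ and runs the induction of \Cref{lem:PS4sound} minus that one case. Your first formulation---that every cut-free $\mathrm{RK4}$ derivation is already a cut-free $\mathrm{RS4}$ derivation, so the result is an immediate corollary of \Cref{lem:PS4sound}---is a slightly cleaner packaging of the same observation.
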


\begin{proof}
$\mathrm{RK4}$ differs from $\mathrm{RS4}$ only in lacking the external structural rule $EC$. So the proof proceeds as for \Cref{lem:PS4sound} above but without the $EC$ case. 
\end{proof}

Given lemmas \ref{lem:PS4sound} and \ref{lem:PK4sound}, if there is a $\mathrm{PS4}$ counterexample to a hypersequent, it will be unprovable in each of $\mathrm{RS4}$ and $\mathrm{RK4}$. We now show that there is a $\mathrm{PS4}$ counterexample to $C$. 

\begin{lemma}
\label{lem:PS4invalid}
$\nvDash_{\mathrm{PS4}}  C= \Rightarrow {\neg\Box}{\neg\Box}(p\land q)\lor\Box({\neg\Box} p\lor{\Box\neg}{\Box} q)$
\end{lemma}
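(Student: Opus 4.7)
The plan is to exhibit an explicit PS4 countermodel to $C$, built around the three-point skeleton from the failed K4 countermodel attempt in \Cref{lem:K4valid}: take points $i,j,k$ with $iRj$, $jRk$, and force $v(\Box\neg\Box(p\land q),i)=v(\Box p,j)=v(\Box q,k)=1$. The latter two pin $v(p,j)=v(p,k)=v(q,k)=1$; setting $v(q,j)=0$ then makes $v(p\land q,j)=0$, which simultaneously witnesses $\Box(p\land q)=0$ at both $j$ and $i$.

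The remaining points are forced by the PS4 frame conditions. Pseudo-Transitivity on $iRj,jRk$ demands a point $w$ with $iRw$ and $kSw$, and Information Preservation (\Cref{lem:InfoPre}) then forces $v(p,w)=v(q,w)=1$, so $w$ itself does not falsify $p\land q$. I therefore add $z_w$ with $wRz_w$ and $v(p\land q,z_w)=0$; a further Pseudo-Transitivity step forces $iRz_w$. The Back condition applied to $kSw,wRz_w$ demands a point $u$ with $kRu$ and $uSz_w$. Since $\Box q$ at $k$ forces $v(q,u)=1$, Information Preservation along $uSz_w$ forces $v(q,z_w)=1$, so the only way to secure $v(p\land q,z_w)=0$ is $v(p,z_w)=0$.

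The subtle step is Pseudo-Transitivity on $jRk,kRu$: it needs a bridge $b$ with $jRb$ and $uSb$. Taking $b=u$ would require $jRu$, and then $\Box p$ at $j$ would force $v(p,u)=1$; Information Preservation along $uSz_w$ would then force $v(p,z_w)=1$, recreating the K4-style contradiction. Instead I will add the $S$-edge $uSk$ so that $b=k$ works, and set $v(p,u)=*$, exploiting the Strong Kleene gap. This value is consistent with both $uSk$ (where $v(p,u)=0$ would force $v(p,k)=0$, contradicting $v(p,k)=1$) and $uSz_w$ (where $v(p,u)=1$ would force $v(p,z_w)=1$), and no remaining constraint forces $v(p,u)$ to be classical.

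The rest is a routine verification: enumerate all two-step $R$-chains to confirm Pseudo-Transitivity, check Forth and Back for each non-reflexive $S$-edge, verify that Information Preservation is consistent with the chosen valuation, and compute $v(C,i)$ to confirm both disjuncts evaluate to $0$. The main obstacle is controlling the interplay between Pseudo-Transitivity (which keeps generating new $R$-edges that can reach $z_w$) and Information Preservation (which propagates atomic values along $S$ into $z_w$); without the Strong Kleene gap at $u$, one such cascade would force $v(p,z_w)$ to be both $0$ and $1$, and the construction would collapse back into the K4 argument.
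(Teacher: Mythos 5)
Your proposal is correct and takes essentially the same approach as the paper: the six-point model you obtain by closing the $i,j,k$ skeleton under the $\mathrm{PS4}$ frame conditions is, up to relabeling ($w,z_w,u$ for the paper's $n,l,m$) and one inessential $S$-edge, exactly the countermodel the paper exhibits, with the Strong Kleene gap $v(p,u)=*$ placed at the same point and for the same reason. The only difference is presentational -- you derive the model by systematically discharging the Pseudo-Transitivity/Forth/Back obligations, whereas the paper states the finished model and leaves verification to the reader.
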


\begin{proof}
The model in \Cref{fig:PS4countermodel} is a countermodel to $C$.
\begin{figure}[h]
\caption{$\mathrm{PS4}$ countermodel to $C$}
\label{fig:PS4countermodel}
%\begin{center}
\begin{tikzpicture}[modal,world/.append style={minimum size=1.5cm}]

\node[world] (i) [label=above:{$i: {\Box\neg{\Box(p\land q)}}$}] {$\neg q,\neg p$};

\node[world] (j) [label=above:{$j:  {\Box p}$},right=of i] {$\neg q, p$} ;

\node[world] (k) [label=above:{$k: {\Box q}$},right=of j] {$p,q$} ;

\node[world] (m) [label=right:{$m$},right=of k] {$q,*p$};

\node[world] (n) [label=below:{$n: {\Box q}$},below=of k] {$p,q$} ;

\node[world] (l) [label=below:{$l$},below=of j] {$\neg p,q$} ;

\path[->] (i) edge (j); \path[->] (j) edge (k); \path[->] (k) edge (m);
\path[->] (i) edge (w);
\path[->] (i) edge (n);
\path[->] (n) edge (l);

\path[->] (m) edge[bend right=55, zigzag] (k);
\path[->] (m) edge[zigzag] (n);
\path[->] (m) edge[bend left=100, zigzag] (l);
\path[->] (k) edge[zigzag] (n);
\end{tikzpicture}
%\end{center}
\end{figure}
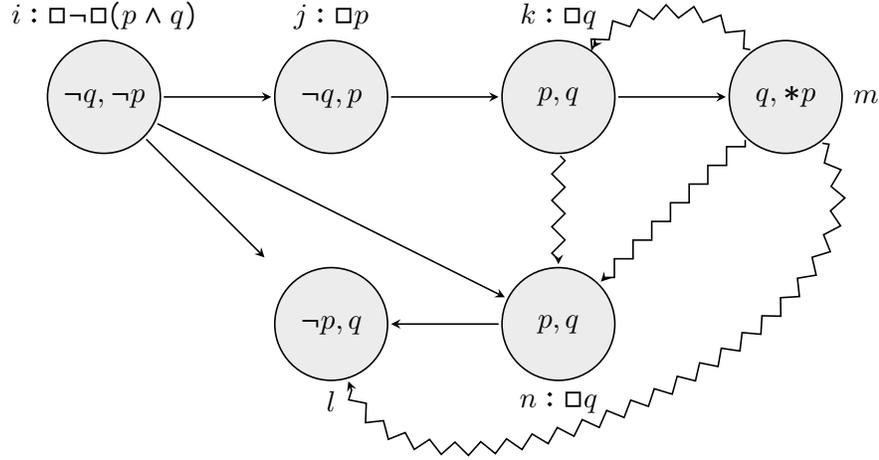
We have  a $PS4$ frame with:
\begin{itemize}
\item six points $i, j, k, m, n, l$;
\item in addition to $R$ being reflexive, we have $iRk, jRk, kRm, iRn, nRl, iRl$; 
\item in addition to $S$ being reflexive, we have $mSk, mSn, mSl, kSn$.
\end{itemize}
We set the valuation function $v$ such that: 
\begin{itemize}
\item $v(q,i)=v(p,i)=v(q,j)=v(p,l)=0$; 
\item $v(p,j)=v(p,k)=v(q,k)=v(q,m)=v(p,n)=v(q,n)=v(q,l)=1$;  
\item $v(p,m)=*$.
\end{itemize}
Verification that this is indeed both a $\mathrm{PS4}$ frame and a $\mathrm{PS4}$ model, and that it is a countermodel to $C$ at the branch made up of the single point $i$ is left to the reader. For the verification that it is a countermodel, first identify that $v(\Box\neg\Box(p\land q),i)=1=v(\Box p,j)=v(\Box q,k)$. The countermodel works by having $iRn$ instead of $iRk$, where $k\sqsubseteq n$, and $v(\Box(p\land q),n)=0$. The branch $i,n,l$ doesn't contain $j$ and so we can have $v(\Box p,n)=1$ but $v(\Box q,n)=0$. It is helpful to compare this with the reasoning in the proof of \Cref{lem:K4valid}.
\end{proof}

We now have what we need to show that $\mathrm{RS4}$ is cut-free incomplete. 

\begin{theorem}
\label{th:RS4hinc}
$\mathrm{RS4}$ is cut-free (i) formula, (ii) sequent, and (iii) hypersequent incomplete relative to $\mathrm{S4}$ (transitive and reflexive Kripke) frames.
\end{theorem}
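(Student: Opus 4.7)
The plan is to obtain all three incompleteness results simultaneously by combining the three preceding lemmas, exploiting the fact that the chosen counterexample $C$ is maximally simple in form. First I would note that $C$ is a hypersequent of the form $\Rightarrow\phi$: it consists of a single sequent, whose antecedent is empty and whose succedent is a single formula. So $C$ counts as a formula, as a sequent, and as a hypersequent, and any proof that it lacks a cut-free derivation settles all three cases at once.

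The actual argument is then a short chain of implications. By \Cref{lem:PS4invalid}, $\nvDash_{\mathrm{PS4}} C$, i.e.\ $C$ has a $\mathrm{PS4}$ countermodel. Contraposing \Cref{lem:PS4sound} (the soundness of cut-free $\mathrm{RS4}$ with respect to $\mathrm{PS4}$ models), it follows that $\nvdash_{\mathrm{RS4_{CF}}} C$. On the other hand, \Cref{lem:S4valid} gives $\vDash_{\mathbf{S4}} C$. Thus $C$ is an $\mathbf{S4}$-valid hypersequent that has no cut-free derivation in $\mathrm{RS4}$, which is precisely cut-free hypersequent incompleteness (iii). Because $C$ is also a sequent and a formula, the same single witness establishes (ii) and (i) as well.

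There is essentially no obstacle left at this stage; the conceptual work has already been done in identifying $C$ and in constructing the $\mathrm{PS4}$ semantics so as to satisfy both soundness (\Cref{lem:PS4sound}) and the existence of a countermodel to $C$ (\Cref{lem:PS4invalid}). The only thing worth flagging explicitly in the write-up is the observation about the form of $C$, since that observation is what lets the three claims collapse into a single verification rather than three separate ones. If anything, the mild subtlety is making sure the definitional chain ``formula $\Rightarrow\phi$ is a sequent, and a sequent is a one-element hypersequent'' is consistent with \Cref{def:counterexample} and the notion of derivation used, so that $\mathbf{S4}$-validity and $\mathrm{RS4_{CF}}$-derivability at each level refer to the same object $C$; this is immediate from the definitions in \Cref{sec:l} and \Cref{sec:m}, so no extra lemma is needed.
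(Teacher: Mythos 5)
Your proposal is correct and takes essentially the same route as the paper: both argue that $C$ is $\mathbf{S4}$-valid (\Cref{lem:S4valid}) but $\mathrm{PS4}$-invalid (\Cref{lem:PS4invalid}), hence not cut-free $\mathrm{RS4}$-provable by soundness with respect to $\mathrm{PS4}$ models (\Cref{lem:PS4sound}), and both exploit the fact that $C$ has the form $\Rightarrow\phi$ so that this single witness settles the formula, sequent, and hypersequent cases at once. The only difference is the trivial one of which level you state first before noting the others follow.
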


\begin{proof}
(i) follows from lemmas \ref{lem:S4valid}, \ref{lem:PS4sound}, and \ref{lem:PS4invalid}.\\
From \Cref{lem:PS4sound} we know that if $C$ were $\mathrm{RS4}$ cut-free provable, then $C$ would be valid in $\mathrm{PS4}$ models. However, from this and \Cref{lem:PS4invalid}, we know that $C$ is not cut-free provable in RS4. Yet $C$ is valid in $\mathbf{S4}$ Kripke frames. So, $\mathrm{RS4}$ is cut-free incomplete relative to $\mathbf{S4}$ Kripke frames.\\
(ii) and (iiii) follow immediately from (i).

\end{proof}

\begin{theorem}
\label{th:RK4hinc}
$\mathrm{RK4}$ is cut-free (i) formula, (ii) sequent, and (iii) hypersequent incomplete relative to $\mathbf{S4}$ (transitive and reflexive Kripke) frames.
\end{theorem}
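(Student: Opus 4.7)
The plan is to run the argument of Theorem~\ref{th:RS4hinc} almost verbatim, swapping in the $\mathbf{K4}$-specific lemmas. The witness is again the hypersequent $C = \Rightarrow\neg\Box\neg\Box(p\land q)\lor\Box(\neg\Box p\lor\Box\neg\Box q)$, and all three ingredients are already in hand: Lemma~\ref{lem:K4valid} supplies $\vDash_{\mathbf{K4}} C$ (whence $\vDash_{\mathbf{S4}} C$ as well, since every $\mathbf{S4}$ frame is a $\mathbf{K4}$ frame); Lemma~\ref{lem:PK4sound} supplies cut-free soundness of $\mathrm{RK4}$ with respect to $\mathrm{PS4}$ models; and Lemma~\ref{lem:PS4invalid} supplies a $\mathrm{PS4}$ countermodel to $C$.

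From these, part~(i) follows by contraposition: if we had $\vdash_{\mathrm{RK4_{CF}}} C$, Lemma~\ref{lem:PK4sound} would force $\vDash_{\mathrm{PS4}} C$, contradicting Lemma~\ref{lem:PS4invalid}. Hence $C$ is valid in the relevant frame class yet not cut-free $\mathrm{RK4}$-provable, establishing formula incompleteness. Parts~(ii) and~(iii) are then immediate, since $C$ is already of the shape $\Rightarrow \phi$, which is simultaneously a sequent $\Gamma\Rightarrow\Delta$ (with $\Gamma=\emptyset$) and a one-member hypersequent $S_1$, so the very same unprovable-but-valid witness doubles as a counterexample to sequent and hypersequent completeness.

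There is essentially no new obstacle here, and I do not expect the proof to require more than a line or two of bookkeeping: Lemma~\ref{lem:PK4sound} was factored out from Lemma~\ref{lem:PS4sound} precisely so that the $EC$-free subsystem $\mathrm{RK4}$ could inherit the same soundness argument, and all the substantive work (the bisimulation conditions on $\mathrm{PS4}$ frames, the $\Box$ cases of Lemma~\ref{lem:InfoPre}, and the construction of the countermodel in Lemma~\ref{lem:PS4invalid}) has already been discharged in the preceding lemmas. The theorem is in effect a corollary of Theorem~\ref{th:RS4hinc}, obtained by deleting the $EC$-related soundness step from that argument and appealing to the $\mathbf{K4}$ validity of $C$ rather than its $\mathbf{S4}$ validity.
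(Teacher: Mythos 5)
Your proposal is correct and matches the paper's own proof, which likewise just reruns the argument of Theorem~\ref{th:RS4hinc} using Lemmas~\ref{lem:K4valid}, \ref{lem:PK4sound} and \ref{lem:PS4invalid}. Your extra remark that $\vDash_{\mathbf{K4}} C$ yields $\vDash_{\mathbf{S4}} C$ (so the witness works for the frame class named in the theorem statement) is a sensible piece of bookkeeping the paper leaves implicit.
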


\begin{proof}
The reasoning is the same as for \Cref{th:RS4hinc} but using lemmas \ref{lem:K4valid}, \ref{lem:PK4sound} and \ref{lem:PS4invalid}.
\end{proof}

\section{RKB and RB Cut-Free Incompleteness}
\label{sec:RBIncompleteness}

The proof in this section has a slightly different structure to that of the previous section. We first identify a hypersequent $J$ that is $\mathbf{KB}$ and $\mathbf{B}$ valid; second, we define a new proof system $\mathrm{RTB}$ and show that $J$ is unprovable in both $\mathrm{RKB}$ and $\mathrm{RTB}$; third, we show that the rule $EC$ is admissable in $\mathrm{RTB}$, meaning that anything that is $\mathrm{RB}$ provable is also $\mathrm{RTB}$ provable. It follows that $J$ is also unprovable in $\mathrm{RB}$, resulting in both $\mathrm{RKB}$ and $\mathrm{RB}$ being hypersequent cut-free incomplete. The cut-free formula and sequent completeness of the two systems, however, remains open. 

\begin{lemma}
\label{lem:{KB}valid}
	$\vDash_{\mathbf{KB}} J=\Rightarrow p\sslash\Rightarrow \Box(\neg{\Box\Box} p\land \neg{\Box\Box} q)\sslash\Rightarrow q$.
\end{lemma}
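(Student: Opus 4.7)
The natural approach mirrors the \textit{reductio} used in \Cref{lem:K4valid}. Assume a symmetric Kripke model supplies a countermodel to $J$. By the definition of a hypersequent countermodel, there must be a branch $w_1, w_2, w_3$ with $w_1 R w_2$ and $w_2 R w_3$ such that $v(p,w_1) = 0$, $v(\Box(\neg\Box\Box p \land \neg\Box\Box q), w_2) = 0$, and $v(q,w_3) = 0$.

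First I would unpack the middle condition. Falsity of the $\Box$-formula at $w_2$ gives a witness $u$ with $w_2 R u$ at which $\neg\Box\Box p \land \neg\Box\Box q$ is false, so at least one of $v(\Box\Box p, u) = 1$ and $v(\Box\Box q, u) = 1$ holds. The proof then proceeds by case analysis on this disjunction.

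The crucial step uses symmetry twice. From the accessibilities already in hand, $Sym$ supplies the converses $u R w_2$, $w_2 R w_1$ and $w_3 R w_2$. In the first case, $v(\Box\Box p, u) = 1$ together with $u R w_2$ forces $v(\Box p, w_2) = 1$; then $w_2 R w_1$ forces $v(p, w_1) = 1$, contradicting $v(p, w_1) = 0$. The second case is entirely symmetric, using $q$, the link $w_2 R w_3$, and landing the contradiction at $w_3$. Either way we reach a contradiction, so no such countermodel exists.

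The only conceptual subtlety, rather than a real obstacle, is seeing why the $\Box\Box$ nesting in the middle sequent is doing essential work: each of the two layers of $\Box$ is consumed by one application of symmetry (first stepping from the witness $u$ back to $w_2$, then from $w_2$ back to an endpoint of the branch). This is exactly what makes the formula $\mathbf{KB}$-valid but not $\mathbf{K}$-valid, and it is what one should highlight to make the structure of the argument transparent. I do not anticipate any genuine difficulty beyond careful book-keeping of which inversions of $R$ are invoked at each step.
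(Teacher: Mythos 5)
Your proposal is correct and follows essentially the same \textit{reductio} as the paper: your $w_1,w_2,w_3,u$ are the paper's $i,j,k,m$, and both arguments use symmetry to obtain $uRw_2$ and $w_2Rw_1$ so that the two layers of $\Box$ push the contradiction back to an endpoint of the branch. The only cosmetic difference is that you list $w_3Rw_2$ among the converses supplied by symmetry, which is not actually needed (case two uses the forward link $w_2Rw_3$, as you then correctly do).
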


\begin{proof}
The proof is a \textit{reductio} of the assumption that there is a countermodel. We start with three points, call them $i$, $j$ and $k$, where $iRj$, $jRk$, $v(p,i)=0=v(q,j)=v(\Box(\neg{\Box\Box}p\land\neg{\Box\Box}q),j)$. Because this is a symmetric Kripke frame, $jRi$ and $kRj$ also. Because $v(\Box(\neg{\Box\Box} p\land\neg{\Box\Box} q),j)=0$ there must be a point, call it $m$, where $jRm$, $mRj$ and $v(\neg{\Box\Box} p\land\neg{\Box\Box} q,m)=0$. %\includegraphics[scale=.15]{B}\\
\begin{center}
\begin{tikzpicture}[modal,world/.append style={minimum size=2.5cm}]

\node[world] (x) [label=above: {$i$}] {$\neg p$};

\node[world] (y) [label=above: {$j$},right=of x] {\tiny$\neg\Box(\neg{\Box\Box} p\land\neg{\Box\Box} q)$};

\node[world] (z) [label=above: {$k$},right=of y] {$\neg q$};

\node[world] (w) [label=below: {$m$},below=of y] {\tiny$\neg(\neg{\Box\Box} p\land\neg{\Box\Box} q)$};

\path[<->] (x) edge (y); \path[<->] (y) edge (z); \path[<->] (y) edge (w); 
\end{tikzpicture}\end{center}
We have two possibilities: one where $v({\Box\Box} p,m)=1$ and another where $v({\Box\Box} q,m)=1$. In the former, $v(\Box p,j)=v(i,p)=1\neq 0=v(p,i)$. In the latter, $v(\Box q,j)=v(q,k)=1\neq 0=v(q,k)$. In each case, a contradiction results. So, there can be no $\mathbf{KB}$ countermodel. 
	
\end{proof}

\begin{lemma}
\label{lem:Bvalid}
	$\vDash_\mathbf{B} J=\Rightarrow p\sslash\Rightarrow \Box(\neg{\Box\Box} p\land \neg{\Box\Box} q)\sslash\Rightarrow q$.
\end{lemma}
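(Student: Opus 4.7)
The plan is to piggy-back on \Cref{lem:{KB}valid} in exactly the same way that \Cref{lem:S4valid} piggy-backs on \Cref{lem:K4valid}. The class of $\mathbf{B}$ frames (reflexive and symmetric) is a subclass of the $\mathbf{KB}$ frames (symmetric), as can be read off \Cref{fig:syst}: imposing an additional frame condition only shrinks the class of admissible frames. In particular, every valuation on a $\mathbf{B}$ frame yields a $\mathbf{KB}$ model.

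Hence, if there were a $\mathbf{B}$ countermodel to $J$, it would, by the inclusion above, also be a $\mathbf{KB}$ countermodel to $J$. This directly contradicts \Cref{lem:{KB}valid}, which rules out any $\mathbf{KB}$ countermodel. Therefore no $\mathbf{B}$ countermodel exists and $\vDash_{\mathbf{B}} J$.

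There is no real obstacle to overcome: the work has already been done in \Cref{lem:{KB}valid}, and the only thing to verify is the set-theoretic containment of frame classes, which is immediate from the definitions. I would expect the written proof to be one or two sentences, exactly parallel in form to the proof of \Cref{lem:S4valid}.
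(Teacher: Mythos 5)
Your proposal is correct and is exactly the argument the paper gives: every $\mathbf{B}$ model is a $\mathbf{KB}$ model, so a $\mathbf{B}$ countermodel to $J$ would contradict \Cref{lem:{KB}valid}. No differences worth noting.
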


\begin{proof}

Every $\mathbf{B}$ model is also a $\mathbf{KB}$ model. So, if there were an $\mathbf{B}$ countermodel to $J$, there would be a $\mathbf{KB}$ countermodel also. From \cref{lem:{KB}valid} there is no $\mathbf{KB}$ countermodel. Hence, there isn't a $\mathbf{B}$ one either.

\end{proof}

\begin{figure}[h]
\caption{Reflexivity-like Rules}
\label{fig:ECMT}
\centering
\begin{tabular}{lll}

 \AxiomC{$G\sslash\Gamma\Rightarrow\Delta\sslash\Gamma\Rightarrow\Delta\sslash G'$}
\RightLabel{\scriptsize{$EC$}}
\UnaryInfC{$G\sslash\Gamma\Rightarrow\Delta\sslash G'$}
\DisplayProof

&

 \AxiomC{$G\sslash\Gamma_1\Rightarrow\Delta_1\sslash\Gamma_2\Rightarrow\Delta_2\sslash G'$}
\RightLabel{\scriptsize{$Merge$}}
\UnaryInfC{$G\sslash\Gamma_1,\Gamma_2\Rightarrow\Delta_1,\Delta_2\sslash G'$}
\DisplayProof

\\\\
\end{tabular}

\begin{tabular}{ccccccccccccc}

 \AxiomC{$G\sslash\Gamma,\phi\Rightarrow\Delta\sslash G'$}
\RightLabel{\scriptsize{$T$}}
\UnaryInfC{$G\sslash\Gamma,\Box\phi\Rightarrow\Delta\sslash G'$}

\DisplayProof
\end{tabular}
\end{figure}
What follows has two parts. One part involves showing that the rule $EC$ is admissible in $\mathrm{RTB}$, the system that is just like $\mathrm{RB}$ except that it has the rule $T$ as a basic rule instead of $EC$ (see \Cref{fig:ECMT}). The other part involves showing that $J$ is unprovable in $\mathrm{RKB}$ and $\mathrm{RTB}$. We do the latter first and then show that $EC$ is admissible in $\mathrm{RTB}$. It then follows that $J$ is unprovable in $\mathrm{RB}$. 

\subsection{$J$ is unprovable in $RKB$ and $RTB$}

$\mathrm{RKB}$ and $\mathrm{RB}$ are known to be sound relative to $\mathbf{KB}$ and $\mathbf{B}$ Kripke frames respectively \cite{BurnsForthcoming,ParisiForthcoming}. To this we add that $\mathrm{RTB}$ is sound relative to $\mathbf{B}$ Kripke frames.

\begin{lemma}
\label{lem:RTBsound}
If $\vdash_{\mathrm{RTB}} H$ then $\vDash_{\mathbf{B}} H$. 
\end{lemma}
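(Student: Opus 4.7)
The plan is to proceed by induction on the length of derivations in $\mathrm{RTB}$, showing that each rule of $\mathrm{RTB}$ preserves validity over the class of $\mathbf{B}$-frames (i.e.\ reflexive and symmetric Kripke frames). Since $\mathrm{RTB}$ and $\mathrm{RKB}$ share every rule except that $T$ replaces $EC$, the bulk of the work can be offloaded onto the already-established soundness of $\mathrm{RKB}$ relative to $\mathbf{KB}$-frames: because $\mathbf{B}$-frames form a subclass of $\mathbf{KB}$-frames (the additional restriction being reflexivity), any rule known to be sound over $\mathbf{KB}$-frames is automatically sound over $\mathbf{B}$-frames. This disposes of $\mathrm{Id}$, $\mathrm{Cut}$, the external and internal structural rules ($EW\mathrm{L}$, $EW\mathrm{R}$, $T\mathrm{L}$, $T\mathrm{R}$, $Sym$), and all the operational rules ($\neg\mathrm{L}$, $\neg\mathrm{R}$, $\land\mathrm{L_i}$, $\land\mathrm{R}$, $\lor\mathrm{L}$, $\lor\mathrm{R_i}$, $\Box\mathrm{L}$, $\Box\mathrm{R}$).

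The only case requiring new work is the rule $T$, which takes $G \sslash \Gamma, \phi \Rightarrow \Delta \sslash G'$ to $G \sslash \Gamma, \Box\phi \Rightarrow \Delta \sslash G'$. I would argue contrapositively: suppose the conclusion has a $\mathbf{B}$-countermodel. Then there is a $\mathbf{B}$-model with a branch $w_1, \ldots, w_n$ countermodelling the conclusion, with $\Gamma, \Box\phi \Rightarrow \Delta$ countermodelled at the distinguished point $w_i$. In particular $v(\Box\phi, w_i) = 1$, and every member of $\Gamma$ is true at $w_i$ while every member of $\Delta$ is false there. By reflexivity of $R$ (since the frame is a $\mathbf{B}$-frame), $w_i R w_i$, and hence by the $\Box$ clause of the Kripke semantics $v(\phi, w_i) = 1$. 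The same branch $w_1, \ldots, w_n$ in the same model therefore countermodels the premise $G \sslash \Gamma, \phi \Rightarrow \Delta \sslash G'$, since the only altered sequent at $w_i$ now has $\phi$ on the left (true at $w_i$) in place of $\Box\phi$, while all other sequents are countermodelled exactly as before.

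Combining the inductive hypothesis with these rule-by-rule checks yields the conclusion: any hypersequent with a derivation in $\mathrm{RTB}$ is $\mathbf{B}$-valid. There is no real obstacle here; the proof is routine given the established soundness of $\mathrm{RKB}$, and the $T$ rule case is just a one-line appeal to reflexivity. The point of isolating this lemma is not its difficulty but its role in the subsequent argument, where $\mathrm{RTB}$-soundness together with the unprovability of $J$ in $\mathrm{RTB}$ (to be combined with the admissibility of $EC$ in $\mathrm{RTB}$) will deliver the cut-free hypersequent incompleteness of $\mathrm{RB}$.
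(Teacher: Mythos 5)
Your proof is correct and matches the paper's: the paper likewise proceeds by induction on the length of derivations, treats every rule other than $T$ as routine, and handles $T$ by exactly your reflexivity argument (a countermodel to the conclusion at $w_i$ has $v(\Box\phi,w_i)=1$, so since $w_iRw_i$ also $v(\phi,w_i)=1$, whence the same branch countermodels the premise). One small caution about your framing of the other cases: ``sound over $\mathbf{KB}$-frames implies sound over the subclass of $\mathbf{B}$-frames'' is only automatic because the rule-by-rule arguments establish the \emph{local} property that any countermodel to the conclusion yields, in the same model, a countermodel to a premise---mere preservation of validity over a class does not in general transfer to a subclass.
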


\begin{proof}
The proof proceeds by induction on the length of derivations. We only display the $T$ case. We have a derivation $\delta$ of the following form:

\begin{tabular}{ccccccccccccc}
\\
 \AxiomC{$\vdots$}
 \noLine
 \UnaryInfC{$G\sslash\Gamma,\phi\Rightarrow\Delta\sslash G'$}
\RightLabel{\scriptsize{$T$}}
\UnaryInfC{$G\sslash\Gamma,\Box\phi\Rightarrow\Delta\sslash G'$}

\DisplayProof
\\\\
\end{tabular}

\noindent Suppose we have a countermodel $\mathfrak{M}$ to the endhypersequent. This is a branch of points $w_1,...,w_i,...w_n$ with $w_i$ being a countermodel to the displayed sequent. By the reflexivity condition on $\mathbf{B}$ frames, $w_iRw_i$. Hence, by the $\Box$ truth conditions, $v(\phi,w_i)=1$. This means that our branch of points is also a countermodel to the premise hypersequent. 

\end{proof}

\begin{lemma}
\label{lem:RTBinvalid}
	$\nvdash_{\mathrm{RTB}} J=\Rightarrow p\sslash\Rightarrow \Box(\neg{\Box\Box} p\land \neg{\Box\Box} q)\sslash\Rightarrow q$.
\end{lemma}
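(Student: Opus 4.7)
The plan is to carry out a backward proof-search argument in cut-free $\mathrm{RTB}$. Starting from $J$ and considering each rule of $\mathrm{RTB}$ as a possible final (bottommost) rule, I would show that every backward decomposition eventually requires a non-derivable subgoal.

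First, I would enumerate the candidate last rules. Since $J$ has empty LHSes and its only complex formula $\alpha = \Box(\neg\Box\Box p \land \neg\Box\Box q)$ sits in the middle sequent, the candidates reduce to $Sym$, $T\mathrm{R}$-weakening of one of $p$, $\alpha$, or $q$, and $\Box\mathrm{R}$ (which requires $\alpha$ to be in the rightmost sequent; in $\mathrm{RTB}$ this can only be arranged by stripping $q$ via $T\mathrm{R}$ and then removing the resulting empty tail via $EW\mathrm{R}$).

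Second, following the principal decomposition path --- backward applications of $T\mathrm{R}$, $EW\mathrm{R}$, $\Box\mathrm{R}$, $\land\mathrm{R}$, and $\neg\mathrm{R}$ --- I would reduce $J$ to two critical subgoals: $\Rightarrow p \sslash \Rightarrow \sslash \Box\Box p \Rightarrow$ (which is derivable, starting from $p \Rightarrow p$, by two $EW\mathrm{L}$s, two $\Box\mathrm{L}$s, and a final $Sym$) and $\Rightarrow p \sslash \Rightarrow \sslash \Box\Box q \Rightarrow$. The latter is the obstruction, and I would show it is not cut-free derivable by observing that no rule of $\mathrm{RTB}$ removes atomic occurrences: every atom in a cut-free derived hypersequent traces back to an $\mathrm{Id}$-axiom $r \Rightarrow r$ and leaves a lineage on both sides. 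In the obstructing target, $p$ appears only on an RHS and $q$ only inside $\Box\Box q$ on an LHS, in distinct sequents, so neither $p \Rightarrow p$ nor $q \Rightarrow q$ alone can produce it, and no cut-free rule of $\mathrm{RTB}$ can combine separate axiom-subtrees in the required way.

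The main obstacle is exhaustiveness --- ensuring every possible backward derivation of $J$, not just the principal path, reduces to an analogous obstruction. I would formalize this via an invariant on hypersequents (an ``atomic balance'' condition recording, for each atom, where its matching axiom-lineages lie modulo $Sym$-reversal), verifying preservation by each $\mathrm{RTB}$ rule and then checking that $J$'s required decomposition is incompatible with it. The interaction between $Sym$ and the asymmetric rules such as $\Box\mathrm{L}$ (which moves formulas forward between adjacent sequents) must be handled carefully, since these can shift the positions of atomic lineages across the hypersequent.
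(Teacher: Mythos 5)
There is a genuine gap. Your overall shape (backward proof search from $J$) matches the paper's, but the way you propose to close off the search does not work. Your key syntactic claim --- that ``no rule of $\mathrm{RTB}$ removes atomic occurrences'' and that ``every atom in a cut-free derived hypersequent traces back to an $\mathrm{Id}$-axiom'' --- is false in this calculus: $T\mathrm{L}$ and $T\mathrm{R}$ are internal weakening rules that introduce formulas (hence atoms) with no axiom lineage at all, and $\land\mathrm{R}$ and $\lor\mathrm{L}$ are two-premise rules that do combine separate axiom subtrees. So the ``atomic balance'' invariant, which you in any case leave undefined and unverified, cannot be preserved in the form you describe, and the exhaustiveness problem you correctly identify as the main obstacle is left unsolved. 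A second, smaller issue: your ``principal decomposition path'' already passes through hypersequents such as $\Rightarrow p\sslash\Rightarrow\Box(\neg\Box\Box p\land\neg\Box\Box q)\sslash\Rightarrow$ that have simple $\mathbf{B}$ countermodels, so framing the failure as located only at the leaf $\Rightarrow p\sslash\Rightarrow\sslash\Box\Box q\Rightarrow$ is misleading; and $\Box\mathrm{R}$ cannot be the last rule of a derivation of $J$ at all, since its conclusion requires the boxed formula to sit in the rightmost component.

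The missing idea is to use the soundness of $\mathrm{RTB}$ with respect to $\mathbf{B}$ frames (the paper's \Cref{lem:RTBsound}) to prune the search at depth one. The only rules that can have $J$ as conclusion are $T\mathrm{R}$ and $Sym$. Each of the three $T\mathrm{R}$-predecessors of $J$ (delete $p$, or the boxed conjunction, or $q$) has an easy $\mathbf{B}$ countermodel, hence is unprovable by soundness; $Sym$ only yields the reversal $J'$, whose $T\mathrm{R}$-predecessors likewise have countermodels; and a derivation cannot consist of $Sym$ applications alone, so eventually a $T\mathrm{R}$ step would be forced and the search closes. Note that soundness would also dispose of your obstructing subgoal directly ($\Rightarrow p\sslash\Rightarrow\sslash\Box\Box q\Rightarrow$ is $\mathbf{B}$-invalid: make $q$ true and $p$ false everywhere), which is a strong hint that the semantic route, not a syntactic lineage argument, is the right tool here.
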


\begin{proof}

We perform a simple backwards proof search in $\mathrm{RTB}$. For the sake of \textit{reductio}, suppose we have a proof $\delta$ of $J$. The last rule applied in $\delta$ would need to either be $T\mathrm{R}$, with a subcase for each formula, or $Sym$.\\
Consider the first case, with three subcases. The subproof $\delta'$ of $\delta$ ending immediately before the application of $TR$ would be of one of the following forms:
\begin{itemize}
\item [(i)] \AxiomC{$\vdots\delta'$}
\noLine
\UnaryInfC{$\Rightarrow \sslash\Rightarrow \Box(\neg{\Box\Box} p\land \neg{\Box\Box} q)\sslash\Rightarrow q$}
\DisplayProof
\item [(ii)] \AxiomC{$\vdots\delta'$}
\noLine
\UnaryInfC{$\Rightarrow p\sslash\Rightarrow \sslash\Rightarrow q$}
\DisplayProof
\item [(iii)]\AxiomC{$\vdots\delta'$}
\noLine
\UnaryInfC{$\Rightarrow p\sslash\Rightarrow \Box(\neg{\Box\Box} p\land \neg{\Box\Box} q)\sslash\Rightarrow $}
\DisplayProof
\end{itemize} 
Each of (i)--(iii) has a simple $\mathbf{B}$ countermodel. Hence, via \Cref{lem:RTBsound} if $J$ were $\mathrm{RTB}$ provable, the last step could not be an application of $T\mathrm{R}$.\\
Consider now the second case of $Sym$. The subproof $\delta'$ of $\delta$ ending immediately before the application of $Sym$ would be of the following form ending in the hypersequent $J'$:
\begin{itemize}
\item  [(iv)] \AxiomC{$\vdots\delta'$}
\noLine
\UnaryInfC{$J'=\Rightarrow q\sslash\Rightarrow \Box(\neg{\Box\Box} p\land \neg{\Box\Box} q)\sslash\Rightarrow p$}
\DisplayProof
\end{itemize}
The last step of $\delta'$ could also, like that of $\delta$, either be $T\mathrm{R}$, with a subcase for each formula, or $Sym$. The former case is near identical to that of $\delta$, with simple $\mathbf{B}$ countermodels to each possible $T\mathrm{R}$ predecessor of $J'$.\\ 
Consider the second $Sym$ case. If the last step of $\delta'$ were $Sym$, then the endhypersequent of the subproof $\delta''$ ending immediately before the application of $Sym$ would be $J$ itself. But the proof couldn't just be endless iterations of $Sym$! Eventually there would have to be an application of $T\mathrm{R}$. Yet we've just seen that none of the possible $T\mathrm{R}$ predecessors of $J$ nor its converse $J'$ are $\mathrm{RTB}$ provable. Hence, $J$ is not $\mathrm{RTB}$ provable.

\end{proof} 

\begin{lemma}
\label{lem:RKBinvalid}
$\nvdash_{\mathrm{RKB}}J$
\end{lemma}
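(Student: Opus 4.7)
The plan is to derive \Cref{lem:RKBinvalid} almost immediately from \Cref{lem:RTBinvalid} by a rule-inclusion argument. Specifically, $\mathrm{RKB}$ is obtained from $\mathrm{RK}$ by adding $Sym$, whereas $\mathrm{RTB}$ is obtained from $\mathrm{RK}$ by adding both $Sym$ and the rule $T$ (and nothing else; $EC$ is not in $\mathrm{RTB}$). Hence every rule instance of $\mathrm{RKB}$ is also a rule instance of $\mathrm{RTB}$, so any derivation in $\mathrm{RKB}$ is ipso facto a derivation in $\mathrm{RTB}$.

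Formally, I would argue by contraposition. Suppose $\vdash_{\mathrm{RKB}} J$; let $\delta$ be such a derivation. Since every axiom and every rule of $\mathrm{RKB}$ is also an axiom/rule of $\mathrm{RTB}$, the same tree $\delta$ witnesses $\vdash_{\mathrm{RTB}} J$. But this contradicts \Cref{lem:RTBinvalid}. Therefore $\nvdash_{\mathrm{RKB}} J$.

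There is essentially no obstacle here, as the lemma is a corollary of the previous one together with the trivial syntactic inclusion of proof systems. The real work was done in \Cref{lem:RTBinvalid} (the backwards proof-search argument on possible last rules $T\mathrm{R}$ and $Sym$), which I would not need to redo. I would just add one remark after the proof noting that the same argument shows $\nvdash_{\mathrm{RKB}_{\mathrm{CF}}} J$ (cut-free) as well, since $\mathrm{Cut}$ was not used in any step of the containment, and this cut-free variant is what is actually needed to combine with the upcoming admissibility of $EC$ in $\mathrm{RTB}$ to conclude cut-free hypersequent incompleteness of $\mathrm{RB}$.
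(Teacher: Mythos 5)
Your proposal is correct and matches the paper's proof, which simply declares the lemma an immediate corollary of \Cref{lem:RTBinvalid}; your rule-inclusion argument ($\mathrm{RKB}=\mathrm{RK}+Sym$ is a subsystem of $\mathrm{RTB}=\mathrm{RK}+Sym+T$, so every $\mathrm{RKB}$ derivation is an $\mathrm{RTB}$ derivation) is exactly the content left implicit there. Your closing remark about the cut-free variant is harmless but not needed for the paper's use of the lemma.
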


\begin{proof}

This is an immediate corollary of \Cref{lem:RTBinvalid}.

\end{proof}

\subsection{$EC$ is Admissable in $\mathrm{RTB}$}

We now show that $EC$ is admissible in $\mathrm{RTB}$. The first step in this is to show that $EC$ is derivable from the rule $Merge$. 

\begin{tabular}{lll}

\AxiomC{$\vdots$}
\noLine
\UnaryInfC{$G\sslash\Gamma\Rightarrow\Delta\sslash\Gamma\Rightarrow\Delta\sslash G'$}
\RightLabel{\scriptsize{$EC$}}
\UnaryInfC{$G\sslash\Gamma\Rightarrow\Delta\sslash G'$}
\DisplayProof

&

\AxiomC{$\vdots$}
\noLine
\UnaryInfC{$G\sslash\Gamma_1\Rightarrow\Delta_1\sslash\Gamma_2\Rightarrow\Delta_2\sslash G'$}
\RightLabel{\scriptsize{$Merge$}}
\UnaryInfC{$G\sslash\Gamma_1,\Gamma_2\Rightarrow\Delta_1,\Delta_2\sslash G'$}
\DisplayProof

\end{tabular}

\begin{lemma}
$EC$ is derivable from $Merge$
\label{lem:ECMerge}
\end{lemma}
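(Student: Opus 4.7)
The plan is to give a one-step derivation: apply $Merge$ directly to the premise of $EC$, taking $\Gamma_1 = \Gamma_2 = \Gamma$ and $\Delta_1 = \Delta_2 = \Delta$. The conclusion produced by $Merge$ is then $G\sslash\Gamma,\Gamma\Rightarrow\Delta,\Delta\sslash G'$, and I would immediately identify this with $G\sslash\Gamma\Rightarrow\Delta\sslash G'$.

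The identification is justified by the definition of sequents at the start of \Cref{sec:l}: a sequent is an ordered pair of \emph{finite sets} of sentences, with the comma convention $\Gamma,\phi$ introduced as shorthand for $\Gamma\cup\{\phi\}$. Because the cedents are sets rather than multisets or sequences, the comma is idempotent, so $\Gamma,\Gamma = \Gamma\cup\Gamma = \Gamma$ and $\Delta,\Delta = \Delta$. Hence the conclusion of the $Merge$ application is literally the same hypersequent as the conclusion of $EC$.

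In schematic form, the derivation is simply
\begin{center}
\AxiomC{$G\sslash\Gamma\Rightarrow\Delta\sslash\Gamma\Rightarrow\Delta\sslash G'$}
\RightLabel{\scriptsize{$Merge$}}
\UnaryInfC{$G\sslash\Gamma,\Gamma\Rightarrow\Delta,\Delta\sslash G'$}
\doubleLine
\RightLabel{\scriptsize{(sets)}}
\UnaryInfC{$G\sslash\Gamma\Rightarrow\Delta\sslash G'$}
\DisplayProof
\end{center}
where the doubled inference line marks the trivial set-identity step rather than a rule application.

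There is no real obstacle here; the only thing to flag is that this argument depends essentially on cedents being sets. If the framework were instead multiset- or sequence-based, one would need an additional internal contraction (or appropriate structural rule) to collapse $\Gamma,\Gamma$ back to $\Gamma$, and the lemma would no longer be immediate. Since Parisi's setup, as recalled in \Cref{def:L} and the sequent definition that follows it, uses sets throughout, no such extra step is needed.
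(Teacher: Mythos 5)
Your proof is correct and is essentially identical to the paper's: both apply $Merge$ once to the premise of $EC$ with $\Gamma_1=\Gamma_2=\Gamma$ and $\Delta_1=\Delta_2=\Delta$, and then observe that since cedents are finite sets, $G\sslash\Gamma,\Gamma\Rightarrow\Delta,\Delta\sslash G'$ is literally the same hypersequent as $G\sslash\Gamma\Rightarrow\Delta\sslash G'$. Your remark that the argument would need an internal contraction step in a multiset-based setting is a fair observation but not needed here.
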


\begin{proof}
\AxiomC{$\vdots$}
\noLine
\UnaryInfC{$G\sslash\Gamma\Rightarrow\Delta\sslash\Gamma\Rightarrow\Delta\sslash G'$}
\RightLabel{\scriptsize{$Merge$}}
\UnaryInfC{$G\sslash\Gamma,\Gamma\Rightarrow\Delta,\Delta\sslash G'$}
\DisplayProof\\
As our sequents are pairs of sets, the conclusion hypersequent is identical to $G\sslash\Gamma\Rightarrow\Delta\sslash G'$.
\end{proof}

We next show that $Merge$ is admissible in $\mathrm{RTB}$. For this we use the following modified definitions of a \emph{main sentence} and \emph{main sequent} from \cite[p.88]{Parisi2017}.  

\begin{definition}[Main Sentence]
$\phi$ is the main sentence of $T\mathrm{L}$, $T\mathrm{R}$, $\neg \mathrm{L}$, $\neg \mathrm{R}$, $\land \mathrm{L}$, $\land \mathrm{R}$, $\Box \mathrm{L}$, $\Box \mathrm{R}$ or $T$, if $\phi$ appears in the conclusion of one of those but not the premise(s). In $\mathrm{Id}$, the main sentence is the only sentence present.
\end{definition}

\begin{definition}[Main Sequent]
The main sequent of a rule schema is either the sequent containing the main sentence of the rule, or is given by the following list:
\begin{itemize}
\item The main sequent of $EW\mathrm{L}$ and $EW\mathrm{R}$ is $\Rightarrow$; and
\item For $\Box \mathrm{L}$ and $\Box \mathrm{R}$,  the \emph{left-main} sequent is the one containing the main sentence. The \emph{right-main} sequent is either the one immediately following the \emph{left-main} sequent or none at all.\footnote{The order of left and right have been changed from \cite[p.88]{Parisi2017} because of the difference in notation.}
\end{itemize}
\end{definition}

\begin{lemma}
If $\vdash_{\mathrm{RTB}} G\sslash\Gamma_1\Rightarrow\Delta_1\sslash\Gamma_2\Rightarrow\Delta_2\sslash G'$ then $\Rightarrow_{\mathrm{RTB}} G\sslash\Gamma_1,\Gamma_2\Rightarrow\Delta_1,\Delta_2\sslash G'$
\label{lem:RTBmerge}
\end{lemma}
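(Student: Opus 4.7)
The plan is to prove the lemma by induction on the height of the $\mathrm{RTB}$-derivation $\delta$ of $G\sslash\Gamma_1\Rightarrow\Delta_1\sslash\Gamma_2\Rightarrow\Delta_2\sslash G'$, with a case split on the last rule. The base case $\mathrm{Id}$ cannot arise, since its conclusion has only a single sequent while ours contains at least two.

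For each inductive case I locate the main sequent(s) of the last rule relative to $\Gamma_1\Rightarrow\Delta_1$ and $\Gamma_2\Rightarrow\Delta_2$. Most cases are uniform: for every propositional rule, for $T\mathrm{L}$, $T\mathrm{R}$, $EW\mathrm{L}$, $EW\mathrm{R}$, and for the $T$ rule itself, the main sentence lies inside a single component of the hypersequent, so I apply the induction hypothesis to the premise(s)---merging the same two adjacent sequents, carrying any side formulas along---and then reapply the rule. For $Sym$ the premise is the reversal of the conclusion, so the two sequents to be merged appear in swapped order but still adjacent; I apply IH to them in the premise and close with a final $Sym$. For $\Box\mathrm{R}$ the premise carries an extra trailing sequent $\Rightarrow\phi$; the only nontrivial subcase is when $G'$ is empty and $\Box\phi\in\Delta_2$, in which case I apply IH to merge $\Gamma_1\Rightarrow\Delta_1$ and $\Gamma_2\Rightarrow\Delta_2\setminus\{\Box\phi\}$ in the premise (with $\Rightarrow\phi$ undisturbed at the end) and then reapply $\Box\mathrm{R}$. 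The analogous $\Box\mathrm{L}$ subcases in which the left-main/right-main pair lies wholly within $G$, wholly within $G'$, or straddles exactly one boundary with the merged sequents, all fit the same schema.

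The essential case is $\Box\mathrm{L}$ when its left-main sequent is $\Gamma_1\Rightarrow\Delta_1$ and its right-main sequent is $\Gamma_2\Rightarrow\Delta_2$. Writing $\Gamma_1 = \Gamma,\Box\phi$, the premise has the form $G\sslash\Gamma\Rightarrow\Delta_1\sslash\Gamma_2,\phi\Rightarrow\Delta_2\sslash G'$. Applying IH to merge these two adjacent sequents yields $G\sslash\Gamma,\Gamma_2,\phi\Rightarrow\Delta_1,\Delta_2\sslash G'$, and a single application of the $T$ rule replaces $\phi$ by $\Box\phi$ to give $G\sslash\Gamma,\Gamma_2,\Box\phi\Rightarrow\Delta_1,\Delta_2\sslash G' = G\sslash\Gamma_1,\Gamma_2\Rightarrow\Delta_1,\Delta_2\sslash G'$, as required. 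This subcase is the main obstacle, and it is precisely where having $T$ primitive in $\mathrm{RTB}$ (in place of $EC$ in $\mathrm{RB}$) earns its keep: after merge, the $\phi$ donated by the right-main of $\Box\mathrm{L}$ must be reconstituted as $\Box\phi$ in the fused sequent, and $T$ is the one-step tool that does exactly this; no other $\mathrm{RK}$ rule can fuse a $\Box$-labelled side formula with its unboxed counterpart within a single sequent.
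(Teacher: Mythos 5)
Your proof is correct and takes essentially the same route as the paper's: induction on the derivation, with the pivotal subcase being $\Box\mathrm{L}$ whose left-main and right-main sequents are exactly the two components being merged, repaired by one application of $T$ after the induction hypothesis. The remaining cases (including $\Box\mathrm{R}$ with its trailing $\Rightarrow\phi$ and the routine reapplication of the last rule elsewhere) are handled just as in the paper, which likewise displays only the $\Box$ rules and $\land\mathrm{R}$ and leaves the rest to the reader.
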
 

\begin{proof}
The proof proceeds by induction on the length of derivations. The base case is trivial as the antecedent of the lemma does not hold. For the induction step, there are a lot of cases to check. We display those for the $\Box$ rules and $\land \mathrm{R}$. The remainder are left to the reader.
In each case we have a derivations $\delta, \delta^+$  of length $n$ of the premise of the rule, which is extended to a derivation $\delta'$ of length $n+1$. We show that if \cref{lem:RTBmerge} holds of the conclusion of $\delta$ then it also holds of the conclusion of $\delta'$.
\begin{itemize}
\item [Case 1:] $\Box \mathrm{L}$. We have four subcases, depending on whether $\Gamma_1\Rightarrow\Delta_1$ or $\Gamma_2\Rightarrow\Delta_2$ are left- or right-main. We display the two subcases where, first $\Gamma_1\Rightarrow\Delta_1$ is left-main and $\Gamma_2\Rightarrow\Delta_2$ is right-main, and second $\Gamma_1\Rightarrow\Delta_1$ is right-main.
\begin{itemize}
\item [Subcase 1:] $\Gamma_1\Rightarrow\Delta_1$ is left-main (and  $\Gamma_2\Rightarrow\Delta_2$ is right-main). We have a derivation $\delta'$ of the form

\AxiomC{$\vdots\delta$}
\noLine
\UnaryInfC{$G\sslash\Gamma_1\Rightarrow\Delta_1\sslash\Gamma_2,\phi\Rightarrow\Delta_2\sslash G'$}
\RightLabel{\scriptsize{$\Box \mathrm{L}$}}
 \UnaryInfC{$G\sslash\Gamma_1,\Box\phi\Rightarrow\Delta_1\sslash\Gamma_2\Rightarrow\Delta_2\sslash G'$}
\DisplayProof\\

We apply the induction hypothesis to $\delta$ and then apply $T$, giving us the desired result

\AxiomC{$\vdots IH\delta$}
\noLine
\UnaryInfC{$G\sslash\Gamma_1,\phi,\Gamma_2\Rightarrow\Delta_1,\Delta_2\sslash G'$}
\RightLabel{\scriptsize{$T$}}
 \UnaryInfC{$G\sslash\Gamma_1,\Box\phi,\Gamma_2\Rightarrow\Delta_1,\Delta_2\sslash G'$}
\DisplayProof

\item [Subcase 2:] $\Gamma_1\Rightarrow\Delta_1$ is right-main. We have a derivation $\delta'$ of the form
\AxiomC{$\vdots\delta$}
\noLine
\UnaryInfC{$G\sslash\Sigma\Rightarrow\Lambda\sslash\Gamma_1,\phi\Rightarrow\Delta_1\sslash\Gamma_2\Rightarrow\Delta_2\sslash G'$}
\RightLabel{\scriptsize{$\Box \mathrm{L}$}}
 \UnaryInfC{$G\sslash\Sigma,\Box\phi\Rightarrow\Lambda\sslash\Gamma_1\Rightarrow\Delta_1\sslash\Gamma_2\Rightarrow\Delta_2\sslash G'$}
\DisplayProof\\

We apply the induction hypothesis to $\delta$ and then apply $\Box \mathrm{L}$, giving us the desired result

\AxiomC{$\vdots IH\delta$}
\noLine
\UnaryInfC{$G\sslash\Sigma\Rightarrow\Lambda\sslash\Gamma_1,\phi,\Gamma_2\Rightarrow\Delta_1,\Delta_2\sslash G'$}
\RightLabel{\scriptsize{$\Box \mathrm{L}$}}
 \UnaryInfC{$G\sslash\Sigma,\Box\phi\Rightarrow\Lambda\sslash\Gamma_1,\Gamma_2\Rightarrow\Delta_1,\Delta_2\sslash G'$}
\DisplayProof

The remaining subcases are like Subcase 2 in that $\Box \mathrm{L}$ is used after applying the induction hypothesis, rather than $T$.
\end{itemize}

\item[Case 2:] $\Box \mathrm{R}$. We have two subcases, one where $\Gamma_2\Rightarrow\Delta_2$ is left-main and another where neither of $\Gamma_1\Rightarrow_\Delta1$ or $\Gamma_2\Rightarrow\Delta_2$ is left-main nor right-main. We display the former and leave the latter to the reader (the reasoning is essentially the same). We have a derivation $\delta'$ of the form

\AxiomC{$\vdots\delta$}
\noLine
\UnaryInfC{$G\sslash\Gamma_1\Rightarrow\Delta_1\sslash\Gamma_2\Rightarrow\Delta_2\sslash\Rightarrow\phi$}
\RightLabel{\scriptsize{$\Box \mathrm{R}$}}
 \UnaryInfC{$G\sslash\Gamma_1\Rightarrow\Delta_1\sslash\Gamma_2,\Rightarrow\Delta_2,\Box\phi$}
\DisplayProof

We then assume the induction hypothesis of $\delta$ and apply $\Box \mathrm{R}$: 

\AxiomC{$\vdots  IH\delta$}
\noLine
\UnaryInfC{$G\sslash\Gamma_1,\Gamma_2\Rightarrow\Delta_1,\Delta_2\sslash\Rightarrow\phi$}
\RightLabel{\scriptsize{$\Box \mathrm{R}$}}
 \UnaryInfC{$G\sslash\Gamma_1,\Gamma_2\Rightarrow\Delta_1,\Delta_2,\Box\phi$}
\DisplayProof

\item [Case 3:] $\land \mathrm{R}$. We have three subcases. Two where $\Gamma_1\Rightarrow\Delta_1$ and $\Gamma_2\Rightarrow\Delta_2$ respectively are main sequents and another where neither are. We display the first .

\AxiomC{$\vdots\delta$}
\noLine
\UnaryInfC{$G\sslash\Gamma_1\Rightarrow\Delta_1,\phi\sslash\Gamma_2\Rightarrow\Delta_2\sslash G'$}
\AxiomC{$\vdots\delta^+$}
\noLine
\UnaryInfC{$G\sslash\Gamma_1\Rightarrow\Delta_1,\psi\sslash\Gamma_2\Rightarrow\Delta_2\sslash G'$}
\RightLabel{\scriptsize{$\land \mathrm{R}$}}
 \BinaryInfC{$G\sslash\Gamma_1\Rightarrow\Delta_1,\phi\land\psi\sslash\Gamma_2\Rightarrow\Delta_2\sslash G'$}
\DisplayProof

We then assume the induction hypothesis to $\delta$ and $\delta^+$ and apply $\land \mathrm{R}$:

\AxiomC{$\vdots  IH\delta$}
\noLine
\UnaryInfC{$G\sslash\Gamma_1,\Gamma_2\Rightarrow\Delta_1,\phi,\Delta_2\sslash G'$}
\AxiomC{$\vdots  IH\delta^+$}
\noLine
\UnaryInfC{$G\sslash\Gamma_1,\Gamma_2\Rightarrow\Delta_1,\psi,\Delta_2\sslash G'$}
\RightLabel{\scriptsize{$\land \mathrm{R}$}}
 \BinaryInfC{$G\sslash\Gamma_1,\Gamma_2\Rightarrow\Delta_1,\phi\land\psi,\Delta_2\sslash G'$}
\DisplayProof
\end{itemize}
\end{proof}

\subsection{Incompleteness}

We now put the results of the previous two sections together to prove cut-free hypersequent incompleteness of $\mathrm{RB}$.

\begin{lemma}
$\nvdash_{\mathrm{RB}} J$
\label{lem:RBinvalid}
\end{lemma}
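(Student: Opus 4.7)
The plan is to leverage the two preceding subsections to show that anything provable in $\mathrm{RB}$ is also provable in $\mathrm{RTB}$, and then invoke \Cref{lem:RTBinvalid} (via its applicability to $J$) to derive a contradiction. The two systems $\mathrm{RB}$ and $\mathrm{RTB}$ share the rules of $\mathrm{RK}$ together with $Sym$; they differ only in that $\mathrm{RB}$ has $EC$ as a basic rule whereas $\mathrm{RTB}$ has $T$ as a basic rule. So the translation between them only needs to handle applications of $EC$.

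First I would establish, as an intermediate fact, that $EC$ is admissible in $\mathrm{RTB}$. This is essentially immediate from the two lemmas just proved: by \Cref{lem:RTBmerge}, $Merge$ is admissible in $\mathrm{RTB}$, and by \Cref{lem:ECMerge}, a single application of $Merge$ derives the conclusion of any $EC$ instance from its premise. So if $G\sslash\Gamma\Rightarrow\Delta\sslash\Gamma\Rightarrow\Delta\sslash G'$ is $\mathrm{RTB}$-provable, then so is $G\sslash\Gamma,\Gamma\Rightarrow\Delta,\Delta\sslash G'$ by admissibility of $Merge$, and since sequents are pairs of sets this hypersequent is just $G\sslash\Gamma\Rightarrow\Delta\sslash G'$.

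Next I would prove by induction on the length of derivations that $\vdash_{\mathrm{RB}} H$ implies $\vdash_{\mathrm{RTB}} H$. The base case ($\mathrm{Id}$) is shared between the two systems. For the inductive step, every rule of $\mathrm{RB}$ apart from $EC$ is also a rule of $\mathrm{RTB}$, so we simply reuse the same rule on the hypersequents supplied by the induction hypothesis. When the last rule is $EC$, the induction hypothesis gives us an $\mathrm{RTB}$-derivation of the premise $G\sslash\Gamma\Rightarrow\Delta\sslash\Gamma\Rightarrow\Delta\sslash G'$, and the admissibility of $EC$ in $\mathrm{RTB}$ established above yields an $\mathrm{RTB}$-derivation of $G\sslash\Gamma\Rightarrow\Delta\sslash G'$.

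With this translation in hand, the conclusion is a short \emph{reductio}: suppose for contradiction that $\vdash_{\mathrm{RB}} J$. Then by the argument just sketched, $\vdash_{\mathrm{RTB}} J$, contradicting \Cref{lem:RTBinvalid}. Hence $\nvdash_{\mathrm{RB}} J$. The main obstacle here has already been dispatched by \Cref{lem:RTBmerge}, which is the technically demanding step; the present lemma is just the packaging of that work with \Cref{lem:ECMerge} and \Cref{lem:RTBinvalid}.
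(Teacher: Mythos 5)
Your proof is correct and follows exactly the route the paper takes: use \Cref{lem:ECMerge} and \Cref{lem:RTBmerge} to get admissibility of $EC$ in $\mathrm{RTB}$, conclude that every $\mathrm{RB}$-provable hypersequent is $\mathrm{RTB}$-provable, and then appeal to \Cref{lem:RTBinvalid}. The only difference is that you spell out the induction translating $\mathrm{RB}$-derivations into $\mathrm{RTB}$-derivations, which the paper leaves implicit.
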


\begin{proof}
Given $EC$ is admissible in $\mathrm{RTB}$ (\Cref{lem:ECMerge} and \cref{lem:RTBmerge}), anything provable in $\mathrm{RB}$ is provable in $\mathrm{RTB}$. But we know that $J$ is unprovable in $\mathrm{RTB}$ (Lemma \ref{lem:RTBinvalid}). Hence $J$ is unprovable in $\mathrm{RB}$.
\end{proof}

\begin{theorem}
$\mathrm{RB}$ is cut-free hypersequent incomplete relative to $\mathbf{B}$ (symmetric, reflexive) Kripke frames.
\end{theorem}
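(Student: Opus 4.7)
The plan is simply to package together the two halves of work already done in \Cref{sec:RBIncompleteness}: the semantic half giving $\vDash_{\mathbf{B}} J$ and the proof-theoretic half giving $\nvdash_{\mathrm{RB_{CF}}} J$. Since cut-free hypersequent completeness relative to $\mathbf{B}$ Kripke frames would require that every $\mathbf{B}$-valid hypersequent has a cut-free $\mathrm{RB}$ derivation, exhibiting a single $\mathbf{B}$-valid hypersequent with no cut-free $\mathrm{RB}$ derivation suffices to establish the theorem. The candidate is of course $J = \Rightarrow p \sslash \Rightarrow \Box(\neg\Box\Box p \land \neg\Box\Box q) \sslash \Rightarrow q$.

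First I would invoke \Cref{lem:Bvalid} to get that $J$ has no countermodel in any reflexive, symmetric Kripke frame, so $\vDash_{\mathbf{B}} J$. Next I would invoke \Cref{lem:RBinvalid}, which gives $\nvdash_{\mathrm{RB}} J$ in the cut-free setting (the backwards proof search in \Cref{lem:RTBinvalid} tacitly assumes cut-freeness, since otherwise the case analysis of ``last rule applied'' would need to include $\mathrm{Cut}$; the subsequent transfer through $Merge$-admissibility in \Cref{lem:RTBmerge} and the derivation of $EC$ from $Merge$ in \Cref{lem:ECMerge} then show $\vdash_{\mathrm{RB_{CF}}} H \Rightarrow \vdash_{\mathrm{RTB_{CF}}} H$). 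Combining these two facts with the definition of cut-free hypersequent completeness immediately yields the theorem: if $\mathrm{RB}$ were cut-free hypersequent complete relative to $\mathbf{B}$-frames, then $\vDash_{\mathbf{B}} J$ would entail $\vdash_{\mathrm{RB_{CF}}} J$, contradicting \Cref{lem:RBinvalid}.

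There is really no obstacle left at this point, since all the substantive work lies upstream: the semantic \emph{reductio} establishing $\mathbf{B}$-validity of $J$, the proof-search argument excluding $T\mathrm{R}$ and $Sym$ as possible final steps in any cut-free $\mathrm{RTB}$ derivation of $J$, and the slightly delicate admissibility-of-$EC$ argument in $\mathrm{RTB}$ via $Merge$. The only care needed in writing the final theorem is to state it explicitly as a cut-free incompleteness result (rather than just incompleteness) and to note that, unlike \Cref{th:RS4hinc} and \Cref{th:RK4hinc}, it is not accompanied by a corresponding cut-free sequent or formula incompleteness claim, since $J$ is a genuine hypersequent and the present method leaves those questions open, as already flagged at the start of \Cref{sec:RBIncompleteness}.
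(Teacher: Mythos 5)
Your proposal is correct and matches the paper's own proof exactly: the theorem is obtained by combining \Cref{lem:Bvalid} ($\vDash_{\mathbf{B}} J$) with \Cref{lem:RBinvalid} ($\nvdash_{\mathrm{RB}} J$), with all the substantive work done upstream. Your parenthetical observation that the backwards proof search in \Cref{lem:RTBinvalid} tacitly restricts to cut-free derivations (otherwise $\mathrm{Cut}$ would have to appear in the case analysis of the last rule) is a fair and worthwhile clarification of the paper's slightly loose notation, but it does not change the argument.
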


\begin{proof}
$\vDash_\mathbf{B} J$ (\Cref{lem:Bvalid})  but $\nvdash_{\mathrm{RB}} J$ (\cref{lem:RBinvalid}). 
\end{proof}

\begin{theorem}
$\mathrm{RKB}$ is cut-free hypersequent incomplete relative to $\mathbf{KB}$ (symmetric) Kripke frames.
\end{theorem}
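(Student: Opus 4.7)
The plan is to mirror exactly the structure of the immediately preceding theorem for $\mathrm{RB}$, exploiting the fact that both of the required ingredients have already been established earlier in the section. Concretely, cut-free hypersequent incompleteness of $\mathrm{RKB}$ relative to $\mathbf{KB}$ frames amounts to exhibiting a single hypersequent which is $\mathbf{KB}$-valid but has no cut-free $\mathrm{RKB}$ derivation, and the hypersequent $J = \Rightarrow p \sslash \Rightarrow \Box(\neg\Box\Box p \land \neg\Box\Box q) \sslash \Rightarrow q$ introduced in \Cref{lem:{KB}valid} is already shown to do both jobs.

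First, I would invoke \Cref{lem:{KB}valid}, which establishes $\vDash_{\mathbf{KB}} J$ by the \emph{reductio} argument already given (starting from a putative three-point countermodel $i,j,k$, using symmetry to force $jRi$ and $kRj$, then unfolding $\Box(\neg\Box\Box p \land \neg\Box\Box q)$ at $j$ to derive a contradiction at $i$ or $k$). Second, I would invoke \Cref{lem:RKBinvalid}, which gives $\nvdash_{\mathrm{RKB}} J$ as a corollary of \Cref{lem:RTBinvalid}; the point there is that the only rules of $\mathrm{RKB}$ that could legally produce $J$ as a conclusion in a cut-free derivation are $T\mathrm{R}$ and $Sym$, and the backward proof search in $\mathrm{RTB}$ already covered exactly these possibilities for $\mathrm{RKB}$ (indeed $\mathrm{RKB}$ has strictly fewer rules than $\mathrm{RTB}$, so the same countermodels to all possible $T\mathrm{R}$-predecessors apply, and iteration of $Sym$ alone cannot terminate a proof).

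Putting the two ingredients together gives the theorem immediately from the definition of cut-free hypersequent completeness. There is no real obstacle here; the theorem is essentially a bookkeeping consequence of \Cref{lem:{KB}valid} and \Cref{lem:RKBinvalid}, and the proof can be stated in a single line in the same style as the preceding theorem for $\mathrm{RB}$. The only thing worth emphasising is why \Cref{lem:RKBinvalid} suffices without needing a separate admissibility argument analogous to that for $EC$ in $\mathrm{RTB}$: $\mathrm{RKB}$ lacks $EC$ altogether, so the extra detour through $\mathrm{RTB}$ used for $\mathrm{RB}$ is not required, and the proof-search argument of \Cref{lem:RTBinvalid} already delivers unprovability in $\mathrm{RKB}$ directly.
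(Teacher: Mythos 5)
Your proposal is correct and matches the paper's own proof, which is exactly the one-line combination of \Cref{lem:{KB}valid} ($\vDash_{\mathbf{KB}} J$) with \Cref{lem:RKBinvalid} ($\nvdash_{\mathrm{RKB}} J$). Your added remark about why no $EC$-admissibility detour is needed for $\mathrm{RKB}$ (its rules being a subset of those of $\mathrm{RTB}$) is a correct gloss on why \Cref{lem:RKBinvalid} is an immediate corollary of \Cref{lem:RTBinvalid}, just as the paper asserts.
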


\begin{proof}
$\vDash_{\mathbf{KB}} J$ (\cref{lem:{KB}valid})  but $\nvdash_{\mathrm{RKB}} J$ (\Cref{lem:RKBinvalid}). 
\end{proof}

It would be nice to know whether $\mathrm{RKB}$ and $\mathrm{RB}$ are also cut-free formula and sequent incomplete. Unfortunately this will remain open in the current paper.

\section{Contrasting the $4$ and $\mathrm{B}$ results}
\label{sec:contrast}

We have managed to show formula, sequent and hypersequent cut-free incompleteness for $\mathrm{RK4}$ and $\mathrm{RS4}$, whereas we have only managed to show hypersequent cut-free incompleteness for $\mathrm{RKB}$ and $\mathrm{RB}$. In the former two, formula, sequent and hypersequent incompleteness directly hang together. For, in both $\mathrm{RK4}$ and $\mathrm{RS4}$, given a hypersequent $H$, there is a formula $I(H)$ such that $H$ is provable iff the hypersequent $\Rightarrow I(H)$ is provable. In $\mathrm{RKB}$ and $\mathrm{RB}$, however, it is unclear whether given a hypersequent $H$ there is such an equivalent formula. 

In the $\mathrm{RK4}$ and $\mathrm{RS4}$ cases, we use a translation from hypersequents to formulas from Burns and Zach
 \cite[p.6]{BurnsForthcoming}: 

{\centering
\begin{tabular}{l}
\\
$I(\Gamma\Rightarrow\Delta)=\bigwedge\Gamma\rightarrow\bigvee\Delta$\\
$I(\Gamma\Rightarrow\Delta\sslash H)= (\bigwedge\Gamma\rightarrow\bigvee\Delta)\lor\Box I(H)$
\\\\
\end{tabular}

}
\noindent where  $\phi\rightarrow\psi \coloneq  \neg\phi\lor\psi$.
Burns and Zach show the equivalence of a relational hypersequent $H$ and its formula translation $I(H)$ within Kripke frames \cite[p.6]{BurnsForthcoming}.\footnote{The same kind of reasoning can be used to show that they are equivalent in the PS4 models from \Cref{def:PS4M}.} The counterexample $C$ to $\mathrm{RK4}$ and $\mathrm{RS4}$'s cut-free completeness is the formula translation of the hypersequent  $\Box\neg\Box(p\land q)\Rightarrow \sslash\Box p\Rightarrow \sslash\Box q\Rightarrow$. That this hypersequent was a counterexample to $\mathrm{RK4}$ and $\mathrm{RS4}$'s hypersequent cut-free completeness was found first and then formula (and therefore sequent) cut-free incompleteness was found via the formula translation. For in $\mathrm{RK4}$ and $\mathrm{RS4}$ a hypersequent $H$ and its formula translation are also equivalent. In the lead up to proving the equivalence, we state the following reduction lemmas.

\begin{lemma}

For both $\mathrm{RK4}$ and $\mathrm{RS4}$: 
\begin{enumerate}
\item If $\vdash H\sslash \Gamma\Rightarrow\Delta,\phi\lor\psi\sslash G$ then  $\vdash H\sslash \Gamma\Rightarrow\Delta,\phi,\psi\sslash G$;
\item If $\vdash H\sslash \Gamma,\phi\land\psi\Rightarrow\Delta\sslash G$ then  $\vdash H\sslash \Gamma,\phi,\psi\Rightarrow\Delta\sslash G$ ;
\item If $\vdash H\sslash \Gamma\Rightarrow\Delta,\neg\phi\sslash G$ then  $\vdash H\sslash \Gamma,\phi\Rightarrow\Delta\sslash G$; 
\item If $\vdash H\sslash \Gamma\Rightarrow\Delta,\Box\phi\sslash G$ then  $\vdash H\sslash \Gamma\Rightarrow\Delta\sslash\Rightarrow \phi$. 

\end{enumerate}
\label{lem:invert}
\end{lemma}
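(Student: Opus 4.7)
The plan is to prove (1)--(4) by induction on the height of derivations, uniformly for $\mathrm{RK4}$ and $\mathrm{RS4}$. Each claim is a height-preserving invertibility statement for the rule whose principal formula is being decomposed. A shorter route for (1)--(3) is available via $\mathrm{Cut}$: because these systems admit $\mathrm{Cut}$ as a basic rule, one can cut the hypothesis hypersequent against an identity-expanded auxiliary hypersequent --- for (1) obtained from $\phi\Rightarrow\phi$ and $\psi\Rightarrow\psi$ by $\lor\mathrm{L}$ and weakening, for (2) from $\land\mathrm{R}$ applied to the expanded identities, and for (3) from $\neg\mathrm{L}$ applied to $\phi\Rightarrow\phi$. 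I will sketch the direct inductive route, which is also available and generalises cleanly to (4).

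The base case is a single axiom $p\Rightarrow p$. Since every formula being reduced is compound, it is not the principal formula of the axiom, so all its occurrences sit in weakened positions of the surrounding hypersequent and the target endhypersequent is obtainable from $\mathrm{Id}$ by $T\mathrm{L}$, $T\mathrm{R}$, $EW\mathrm{L}$, $EW\mathrm{R}$, and $EW$.

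For the induction step, fix the last rule $r$. If $r$ is structural or its principal formula differs from the one being reduced, the reduction commutes with $r$: apply the induction hypothesis to each premise and reapply $r$. When $r$'s principal formula matches the formula being reduced, in (1) $r$ is some $\lor\mathrm{R}_i$ whose premise already contains the required disjunct and $T\mathrm{R}$ adds the other; (2) is the symmetric case via $\land\mathrm{L}_i$ and $T\mathrm{L}$; in (3) the premise of $\neg\mathrm{R}$ is already of the form $H\sslash\Gamma,\phi\Rightarrow\Delta\sslash G$, which is the target; and in (4) the premise of $\Box\mathrm{R}$ is exactly $H\sslash\Gamma\Rightarrow\Delta\sslash\Rightarrow\phi$, since $\Box\mathrm{R}$ can only fire when the $\Box\phi$-sequent is rightmost.

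The main obstacle will be (4) when $G$ is nonempty. Here $\Box\mathrm{R}$ cannot be the rule that introduced $\Box\phi$ in the final step, because it requires the $\Box\phi$-sequent to be rightmost. The plan is to trace $\Box\phi$ back through the derivation to its point of introduction: either it originates from $\Box\mathrm{R}$, after which subsequent $EW$ extensions (and, in $\mathrm{RS4}$, $EC$ contractions) produced the trailing $\sslash G$, in which case the earlier subderivation already ends in $H\sslash\Gamma\Rightarrow\Delta\sslash\Rightarrow\phi$ and can be taken directly; or it originates from a $T\mathrm{R}$ weakening that introduced $\Box\phi$ as a spurious extra, in which case the induction hypothesis on the premise yields a derivation without $\Box\phi$, to which we append $\Rightarrow\phi$ by $EW$ followed by $T\mathrm{R}$. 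The subtlest bookkeeping is with $EC$ in $\mathrm{RS4}$, where a contraction may have merged the $\Box\phi$-sequent with a duplicate: the induction hypothesis must be applied to each copy before the contraction is simulated on the reduced hypersequent, which requires a mild strengthening of the IH to permit simultaneous reduction of multiple occurrences.
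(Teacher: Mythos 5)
Your strategy for (1)--(3) --- induction on the length of derivations, commuting the reduction past any last rule whose principal formula is not the one being decomposed, or alternatively cutting against identity-expanded auxiliaries since $\mathrm{Cut}$ is a basic rule of $\mathrm{RK4}$ and $\mathrm{RS4}$ --- is fine and matches the paper's (very terse) proof, which declares the inductions routine and displays only one commutation case. The problem is item (4), and it sits exactly where you locate ``the main obstacle.'' With an arbitrary non-empty trailing $G$, claim (4) is simply false, so no tracing argument can rescue it: the hypersequent $\Rightarrow\Box p\sslash p\Rightarrow p$ is derivable ($\mathrm{Id}$, then $EW\mathrm{L}$, then $T\mathrm{R}$ adding $\Box p$ to the first component), but the hypersequent $\Rightarrow\sslash\Rightarrow p$ that (4) would then deliver has an obvious reflexive--transitive countermodel (a single reflexive world falsifying $p$) and is therefore underivable even with $\mathrm{Cut}$, by soundness. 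Your own treatment of the $T\mathrm{R}$-origin case makes the failure visible: from the premise $H\sslash\Gamma\Rightarrow\Delta\sslash G$ you propose to append $\Rightarrow\phi$ by $EW$ and $T\mathrm{R}$, but that yields $H\sslash\Gamma\Rightarrow\Delta\sslash\Rightarrow\phi\sslash G$ (or some variant with $G$ still present); no rule of $\mathrm{RK4}$ or $\mathrm{RS4}$ deletes a non-empty component, so the target $H\sslash\Gamma\Rightarrow\Delta\sslash\Rightarrow\phi$ is unreachable --- necessarily so, since it can be invalid while the hypothesis is provable.

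The repair is to read (4) with $G$ empty, i.e.\ as invertibility of $\Box\mathrm{R}$ for a $\Box\phi$ occurring in the \emph{final} component; this is the only instance the subsequent equivalence theorem ever uses (there $H$ and $G$ are both empty and $\Delta$ is a singleton). Even so, the restricted statement is not a wholly routine induction: if the last rule is another $\Box\mathrm{R}$ introducing some $\Box\chi$ into that same final component, its premise pushes $\Box\phi$ back into a non-final position, so the induction hypothesis needs to be formulated with care rather than applied verbatim. Since the lemma concerns $\vdash$ with $\mathrm{Cut}$, a cleaner route for (4) is semantic: the two hypersequents are valid in exactly the same frames, and $\mathrm{RK4_{Cut}}$ and $\mathrm{RS4_{Cut}}$ are sound and (by Appendix A) hypersequent complete. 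Finally, your tracing idea has a secondary flaw even where the statement is true: the subderivation ending at the original $\Box\mathrm{R}$ application concludes some $H_0\sslash\Gamma_0\Rightarrow\Delta_0\sslash\Rightarrow\phi$ whose other components may be further modified below that point, so it cannot simply ``be taken directly.''
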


\begin{proof}
The proofs are routine inductions on the length of derivations. We display the $\land \mathrm{R}$ case for (4) as an example. 

\begin{itemize}

\item [Case 1 $\land \mathrm{R}$:]  

\begin{tabular}{lll}
&
\AxiomC{$\vdots\delta$}
\noLine
\UnaryInfC{$H\sslash \Gamma\Rightarrow \psi,\Delta,\Box\phi\sslash G$}
\AxiomC{$\vdots\delta'$}
\noLine
\UnaryInfC{$H\sslash \Gamma\Rightarrow \xi,\Delta,\Box\phi\sslash G$}
\RightLabel{\scriptsize{$\land\mathrm{R}$}}
\BinaryInfC{$H\sslash \Gamma\Rightarrow \psi\land\xi,\Delta,\Box\phi\sslash G$}
\DisplayProof

\\\\
$\Longrightarrow$
&

\AxiomC{$\vdots$IH$\delta$}
\noLine
\UnaryInfC{$H\sslash \Gamma\Rightarrow \psi,\Delta\sslash\Rightarrow\phi$}
\AxiomC{$\vdots$IH$\delta'$}
\noLine
\UnaryInfC{$H\sslash \Gamma\Rightarrow \xi,\Delta\sslash\Rightarrow\phi$}
\RightLabel{\scriptsize{$\land\mathrm{R}$}}
\BinaryInfC{$H\sslash \Gamma\Rightarrow \psi\land\xi,\Delta\sslash\Rightarrow\phi$}
\DisplayProof
\\
\end{tabular}

\noindent We simply assume IH of $\delta$ and $\delta'$ and then apply $\land \mathrm{R}$.

\end{itemize}

\end{proof}

\begin{theorem}
For both $\mathrm{RK4}$ and $\mathrm{RS4}$: $\vdash H$ iff $\vdash \Rightarrow I(H)$
\end{theorem}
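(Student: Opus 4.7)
The plan is to prove the biconditional by induction on the number of sequents in $H$, strengthened so that the inductive hypothesis reads: for every hypersequent $G$, $\vdash G \sslash H$ iff $\vdash G \sslash \Rightarrow I(H)$. The strengthening is needed so that the inductive step, where $H = \Gamma \Rightarrow \Delta \sslash H'$, can apply the IH to $H'$ with $G \sslash \Gamma \Rightarrow \Delta$ filling the role of $G$.

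In the base case $H = \Gamma \Rightarrow \Delta$ with $I(H) = \neg \bigwedge \Gamma \lor \bigvee \Delta$, the right-to-left direction is a chain of inversions from Lemma \ref{lem:invert}: iterated applications of (1) strip the succedent disjunction back to the set $\{\neg \bigwedge \Gamma\} \cup \Delta$, (3) transposes $\neg \bigwedge \Gamma$ to $\bigwedge \Gamma$ on the left, and iterated applications of (2) decompose $\bigwedge \Gamma$ back to $\Gamma$. The left-to-right direction runs the same moves forward using the primitive rules: iterated $\land\mathrm{L}$ applications collapse $\Gamma$ into $\bigwedge \Gamma$, a single $\neg\mathrm{R}$ transposes, and iterated $\lor\mathrm{R}$ applications collapse the succedent to $\neg \bigwedge \Gamma \lor \bigvee \Delta$.

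For the inductive step, $I(H) = (\neg\bigwedge\Gamma \lor \bigvee\Delta) \lor \Box I(H')$. Right-to-left, Lemma \ref{lem:invert}(1) peels off $\Box I(H')$, the base-case inversions yield $\vdash G \sslash \Gamma \Rightarrow \Delta, \Box I(H')$, Lemma \ref{lem:invert}(4) then produces $\vdash G \sslash \Gamma \Rightarrow \Delta \sslash \Rightarrow I(H')$, and the IH closes the argument. Left-to-right, the IH first produces $\vdash G \sslash \Gamma \Rightarrow \Delta \sslash \Rightarrow I(H')$ from $\vdash G \sslash \Gamma \Rightarrow \Delta \sslash H'$, $\Box\mathrm{R}$ yields $\vdash G \sslash \Gamma \Rightarrow \Delta, \Box I(H')$, and the base-case forward construction then builds $(\neg\bigwedge\Gamma \lor \bigvee\Delta) \lor \Box I(H')$.

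The main obstacle will be the forward collapsing step in the base case, which exploits the fact that sequents are sets. To eliminate an antecedent pair $\gamma_i, \gamma_j$ in favour of $\gamma_i \land \gamma_j$, I would first apply $\land\mathrm{L}_1$ to replace $\gamma_i$ by $\gamma_i \land \gamma_j$ (leaving $\gamma_j$ stranded), then apply $\land\mathrm{L}_2$ to replace the stranded $\gamma_j$ by the same formula $\gamma_i \land \gamma_j$, which is absorbed by set identification. Iterating this two-step dance collapses $\Gamma$ down to any fixed-association $\bigwedge \Gamma$, and the dual construction with $\lor\mathrm{R}_1, \lor\mathrm{R}_2$ collapses the succedent. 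Fixing an association convention, and handling the edge cases where $\Gamma$ or $\Delta$ is empty (so $\bigwedge\emptyset$ and $\bigvee\emptyset$ need some convention), will be the remaining bookkeeping.
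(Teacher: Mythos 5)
Your proposal is correct and follows essentially the same route as the paper's proof: induction on the number of sequent components, with \Cref{lem:invert} supplying the inversions for the right-to-left direction and the connective rules run forward (exploiting the set-based reading of sequents) for the left-to-right direction. The one refinement worth noting is that you make explicit the strengthened induction hypothesis with an arbitrary prefix $G$, which the paper's proof tacitly relies on when it applies the inductive hypothesis inside the context $\Gamma_1\Rightarrow\Delta_1\sslash\,\cdot\,$.
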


\begin{proof}
For the proof we have the two hypersequents:
\begin{itemize}
\item $H=\Gamma_1\Rightarrow\Delta_1\sslash...\sslash\Gamma_n\Rightarrow\Delta_n$; and
\item $I(H)= (\bigwedge\Gamma_1\rightarrow\bigvee\Delta_1) \lor\Box((...\Box(\bigwedge\Gamma_n\rightarrow\bigvee\Delta_n)...)) $.
\end{itemize}
For the the left to right direction we assume a derivation of $H$. It is simply a matter of applying the connective rules to derive $I(H)$ from $H$.\\
For the right to left direction we assume a derivation $\delta\vdash I(H)$. We proceed by induction on $n$ as follows: 
In the base $n=1$ case, where $H=\Gamma_1\Rightarrow\Delta_1$ and $I(H)=\bigwedge\Gamma_1\rightarrow\bigvee\Delta_1$, you just apply \Cref{lem:invert}(1)-(3) in whichever order you want to obtain the fact that there is a derivation $\delta'\vdash H=\Gamma_1\Rightarrow\Delta_1$.\\ 
For the induction step we have the instances:
\begin{itemize}
\item$H=\Gamma_1\Rightarrow\Delta_1\sslash H'$; and
  \item $I(H)=(\bigwedge\Gamma_{1}\rightarrow\bigvee\Delta_{1})\lor\Box I(H')$
\end{itemize}
where $H'$ is $n$ sequents long. Applying first \Cref{lem:invert}(1) and then (4) shows us that there is a derivation $\delta'\vdash(\bigwedge\Gamma_1\rightarrow\bigvee\Delta_1)\sslash I(H')$. We then apply the same reasoning as in the base case to show that there is a derivation $\delta''\vdash(\Gamma_1\Rightarrow\Delta_1)\sslash I(H')$. From the induction hypothesis we have that there is a derivation $\delta'''\vdash(\Gamma_1\Rightarrow\Delta_1)\sslash H'$.
\end{proof}

In contrast to $\mathrm{RK4}$ and $\mathrm{RS4}$, in $\mathrm{RKB}$ and $\mathrm{RB}$ a hypersequent and its Burns and Zach formula translation are not always equivalent. The left to right direction of the equivalence does hold -- the reasoning simply involves applying the relevant connective rules to the hypersequent $H$, just as with $\mathrm{RK4}$ and $\mathrm{RS4}$. The equivalence breaks down, however, in the right to left direction. For $J$ is unprovable, whereas $\Rightarrow I(J)$ is provable. Consider the following proof of $I(J)$: 

\begin{tabular}{c}
\small
\AxiomC{$p\Rightarrow p$}
\RightLabel{\scriptsize{$EW\mathrm{L}$}}
\UnaryInfC{$\Rightarrow\sslash p\Rightarrow p$}
\RightLabel{\scriptsize{$EW\mathrm{L}$}}
\UnaryInfC{$\Rightarrow\sslash\Rightarrow\sslash p\Rightarrow p$}
\RightLabel{\scriptsize{$\Box \mathrm{L}$}}
\UnaryInfC{$\Rightarrow\sslash{\Box} p\Rightarrow\sslash\Rightarrow p$}
\RightLabel{\scriptsize{$\Box \mathrm{L}$}}
\UnaryInfC{${\Box\Box} p\Rightarrow\sslash\Rightarrow\sslash\Rightarrow p$}
\RightLabel{\scriptsize{$Sym$}}
\UnaryInfC{$\Rightarrow p\sslash\Rightarrow\sslash{\Box\Box} p\Rightarrow $}
\RightLabel{\scriptsize{$\neg \mathrm{R}$}}
\UnaryInfC{$\Rightarrow p\sslash\Rightarrow\sslash\Rightarrow\neg{\Box\Box} p $}
\RightLabel{\scriptsize{$T \mathrm{R}$}}
\UnaryInfC{$\Rightarrow p\sslash\Rightarrow\Box q\sslash\Rightarrow\neg{\Box\Box} p $}

\AxiomC{$q\Rightarrow q$}
\RightLabel{\scriptsize{$EW\mathrm{L}$}}
\UnaryInfC{$\Rightarrow\sslash q\Rightarrow q$}
\RightLabel{\scriptsize{$\Box \mathrm{L}$}}
\UnaryInfC{${\Box} q\Rightarrow\sslash\Rightarrow q$}
\RightLabel{\scriptsize{$\Box \mathrm{R}$}}
\UnaryInfC{${\Box} q\Rightarrow\Box q$}
\RightLabel{\scriptsize{$EW\mathrm{L}$}}
\UnaryInfC{$\Rightarrow\sslash\Box q\Rightarrow\Box q$}
\RightLabel{\scriptsize{$\Box \mathrm{L}$}}
\UnaryInfC{${\Box\Box} q\Rightarrow\sslash\Rightarrow{\Box} q$}
\RightLabel{\scriptsize{$Sym$}}
\UnaryInfC{$\Rightarrow{\Box} q\sslash{\Box\Box} q\Rightarrow$}
\RightLabel{\scriptsize{$\neg \mathrm{R}$}}
\UnaryInfC{$\Rightarrow{\Box} q\sslash\Rightarrow\neg{\Box\Box} q$}
\RightLabel{\scriptsize{$EW\mathrm{L}$}}
\UnaryInfC{$\Rightarrow\sslash\Rightarrow{\Box} q\sslash\Rightarrow\neg{\Box\Box} q$}
\RightLabel{\scriptsize{$T \mathrm{R}$}}
\UnaryInfC{$\Rightarrow p\sslash\Rightarrow{\Box} q\sslash\Rightarrow\neg{\Box\Box} q$}

\RightLabel{\scriptsize{$\land \mathrm{R}$}}
\BinaryInfC{$\Rightarrow p\sslash\Rightarrow\Box q\sslash\Rightarrow\neg{\Box\Box} p\land\neg{\Box\Box} q$}
\RightLabel{\scriptsize{$\Box \mathrm{R}$}}
\UnaryInfC{$\Rightarrow p\sslash\Rightarrow\Box q,\Box(\Rightarrow\neg{\Box\Box} p\land\neg{\Box\Box} q)$}
\RightLabel{\scriptsize{$\lor \mathrm{R}_1$}}
\UnaryInfC{$\Rightarrow p\sslash\Rightarrow\Box q,\Box(\neg{\Box\Box} p\land\neg{\Box\Box} q)\lor\Box q$}
\RightLabel{\scriptsize{$\lor \mathrm{R}_2$}}
\UnaryInfC{$\Rightarrow p\sslash\Box(\neg{\Box\Box} p\land\neg{\Box\Box} q)\lor\Box q$}
\RightLabel{\scriptsize{$\Box \mathrm{R}$}}
\UnaryInfC{$\Rightarrow p,\Box(\Box(\neg{\Box\Box} p\land\neg{\Box\Box} q)\lor\Box q)$}
\RightLabel{\scriptsize{$\lor \mathrm{R}_2$}}
\UnaryInfC{$\Rightarrow p,p\lor\Box(\Box(\neg{\Box\Box} p\land\neg{\Box\Box} q)\lor\Box q)$}
\RightLabel{\scriptsize{$\lor \mathrm{R}_1$}}
\UnaryInfC{$\Rightarrow p\lor\Box(\Box(\neg{\Box\Box} p\land\neg{\Box\Box} q)\lor\Box q)$}

\DisplayProof
\\\\
\end{tabular}

\noindent\textit{If} there is an adequate formula translation of hypersequents for $\mathrm{RKB}$ and $\mathrm{RB}$, i.e. a mapping $I'$ such that a hypersequent $H$ is provable iff the hypersequent $\Rightarrow I'(H)$ is provable, then we have a quick route to formula and, therefore also, sequent cut-free incompleteness. Whether there is one remains to be found.\footnote{We also have a breakdown of the equivalence of $\Box(\phi\land\psi)$ and $\Box\phi\land\Box\psi$. We do have $\vdash_{\mathrm{RKB}}\Box\phi\land\Box\psi\Rightarrow\Box(\phi\land\psi)$ and $\vdash_{\mathrm{RKB}}\Box(\phi\land\psi)\Rightarrow\Box\phi\land\Box\psi$. Interestingly, however, while $J$ is unprovable, the hypersequent $J'=\Rightarrow p//\Rightarrow\Box\neg{\Box\Box} p\land\Box\neg{\Box\Box} q//\Rightarrow q$ is provable (the proof is very similar to that of $I(J)$). This is also a concrete example of $\mathrm{Cut}$ failing, because if $\mathrm{Cut}$ were admissible, from the proof of $J'$ we would know that there is a proof of $J$. }

\section{Conclusions and Open Questions}
\label{sec:end}

This paper has answered a number of questions that were raised in \Cref{sec:c}. We now know that $\mathrm{RKB}$, $\mathrm{RB}$, $\mathrm{RK4}$ and $\mathrm{RS4}$ are cut-free hypersequent incomplete, and that the latter two are also cut-free sequent and formula incomplete. Hence, the cut-free systems are not adequate for the intended Kripke frames. Importantly, as a consequence $\mathrm{Cut}$ is not an admissible rule in any of the four systems, causing problems for Parisi's project of using them as the basis for an inferentialist account of modality.  
This still leaves open the cut-free sequent and formula completeness, and hence adequacy, of $\mathrm{RKB}$ and $\mathrm{RB}$. They may turn out to be cut-free formula and sequent complete, even if though they are cut-free hypersequent incomplete. If so, it's conceivable that someone might be more concerned about the former than the latter. 
After all, sequent completeness captures the notion of being complete in regards to arguments and formula completeness in regards to theorems. In contrast, there isn't a pre-existing notion that hypersequent completeness captures, plausibly because hypersequents have been introduced as a tool for obtaining an adequate proof theory. 
However, if, with Parisi, one accepts that Cut Admissibility is required for an inferentialist account of modality, then Parisi's systems $\mathrm{RK4}$, $\mathrm{RS4}$, $\mathrm{RKB}$ and $\mathrm{RB}$ will not do, questions of completeness aside. 

A number of other questions remain, some technical, others more philosophical:

\begin{itemize}

\item Are $\mathrm{RKB}$ and $\mathrm{RB}$ cut-free formula and sequent complete? 

\item Are there adequate hypersequent systems that meet Parisi's, and Burns and Zach's criteria, i.e.\ cut-admissibility, Do\v{s}en's principle, and modularity? What is common in the cases discussed in this paper is that the tree structure of standard Kripke frames is not fully captured in Parisi's relational hypersequents, at least for $\mathrm{RK4}$, $\mathrm{RS4}$, $\mathrm{RKB}$ and $\mathrm{RB}$.  There might be a way to capture this with relational hypersequents using different rules. Alternatively, a more complex structure like Poggiolesi's tree hypersequents \cite{Poggiolesi2011} might be needed.

\item What kind of models are Parisi's cut-free $\mathrm{RKB}$, $\mathrm{RB}$, $\mathrm{RK4}$ and $\mathrm{RS4}$ complete relative to? The latter two may be complete relative to the Pseudo S4 models defined in this paper. Conversely, what is the logic of the Pseudo-Models? While cooked up for the purpose proving incompleteness, they may be worth studying in their own right.

\end{itemize}

\appendix
\section{$\mathrm{RK4}$ and $\mathrm{RS4}$ with Cut are Hypersequent Complete}

For clarity, we refer to $\mathrm{RK4}$ and $\mathrm{RS4}$ with $\mathrm{Cut}$ as a basic rule as $\mathrm{RK4_{Cut}}$ and $\mathrm{RS4_{Cut}}$ respectively. 

The following is a modification of Burns and Zach's cut-free completeness proofs \cite{BurnsForthcoming}. Rather than reproduce the proof in total, only the modifications are given here, with the reader directed to the relevant parts of \cite{BurnsForthcoming}. These are all from \S 3 of \cite{BurnsForthcoming} unwards. 

Definitions 13-15 are left unchanged. We modify Definition 16 \cite[p.10, 15]{BurnsForthcoming} to replace Burns and Zach's reduction rule for $\Box \mathrm{L}$ with $\Box \mathrm{L'}$.

{\raggedleft
\begin{tabular}{l|c|c|r}

$\Box\mathrm{L}$ & $G//\Box\phi,\Gamma'\overset{\sigma'}{\Rightarrow}\Delta'//\Gamma\overset{\sigma}{\Rightarrow}\Delta//G'$ & $G//\Box\phi,\Gamma'\overset{\sigma'}{\Rightarrow}\Delta'//\phi,\Gamma\overset{\sigma}{\Rightarrow}\Delta//G'$\\

$\Box\mathrm{L}'$ & $G//\Gamma,\Box\phi\overset{\sigma'}{\Rightarrow}\Delta//G'//\Sigma\overset{\sigma}{\Rightarrow}\Lambda//G''$ & $G//\Gamma,\Box\phi\overset{\sigma'}{\Rightarrow}\Delta//G'//\Sigma,\phi\overset{\sigma}{\Rightarrow}\Lambda//G''$
\\\\
\end{tabular}
Note that $\Box \mathrm{L}$ is an instance of $\Box \mathrm{L'}$. 
}

The proof of Proposition 17 \cite[p.11-12]{BurnsForthcoming} is then modified to show that $\Box \mathrm{L'}$ preserves unprovability in $\mathrm{RK4_{Cut}}$ and $\mathrm{RS4_{Cut}}$. This is shown by the following derivation: 

{\small
{\raggedleft
\AxiomC{$\vdots$}
\noLine
\UnaryInfC{$G\sslash\Gamma\Rightarrow\Delta\sslash G'\sslash\Sigma,\phi\Rightarrow\Lambda\sslash G''$}
\RightLabel{\scriptsize{$T\mathrm{L}$}}
\UnaryInfC{$G\sslash\Gamma,\Box\phi\Rightarrow\Delta\sslash G'\sslash\Sigma,\phi\Rightarrow\Lambda\sslash G''$}

\AxiomC{$\phi\Rightarrow\phi$}

\RightLabel{\scriptsize{$EW\mathrm{L}$}}
\UnaryInfC{$\Rightarrow\sslash \phi\Rightarrow\phi$}
\RightLabel{\scriptsize{$\Box\mathrm{L}$}}
\UnaryInfC{$\Box\phi\Rightarrow\sslash\Rightarrow\phi$}
\RightLabel{\scriptsize{$EW$}}
\UnaryInfC{$\Box \phi\Rightarrow\sslash \Rightarrow\sslash\Rightarrow\phi$}
\RightLabel{{*}}
\UnaryInfC{$G\sslash\Gamma,\Box\phi\Rightarrow\Delta\sslash G'\sslash\Sigma\Rightarrow\Lambda,\phi\sslash G''$}
\RightLabel{\scriptsize{$\mathrm{Cut}$}}
\BinaryInfC{$G\sslash\Gamma,\Box\phi\Rightarrow\Delta\sslash G'\sslash\Sigma\Rightarrow\Lambda\sslash G''$}
\DisplayProof
\\
* Multiple possible applications of internal and external weakening

}
}

We leave Proposition 18 and its proof unchanged. However, we modify Proposition 19 \cite[p.11, 15]{BurnsForthcoming} to add on further components Proposition 19(6) and (7):
\begin{itemize}
\item[(6)] If $\Box\phi\in\Gamma,\sigma{R^+}\tau$ and $\tau$ occurs in $H$, then $\phi\in\Gamma(H,\tau)$.
\item[(7)] If $\Box\phi\in\Gamma,\sigma{R^*}\tau$ and $\tau$ occurs in $H$, then $\phi\in\Gamma(H,\tau)$.
\end{itemize}
\begin{proof}
For (6), suppose that $\Box\phi\in\Gamma$, $\sigma{R^+}\tau$ and $\tau$ occurs in $H$. Since $\Sigma(H)$ is an $R^1$-branch and $\sigma{R^+}\tau$, the component $H(\tau)$ occurs to the right of $H(\sigma)$. Because $\Box\phi\in\Gamma$, the hypersequent $G\sslash\Gamma,\Box\phi\overset{\sigma}\Rightarrow\Delta\sslash G'\sslash\Sigma,\phi\overset{\tau}{\Rightarrow}\Lambda\sslash G''$ is a $\Box \mathrm{L'}$ $\tau$-reduct of $G\sslash\Gamma,\Box\phi\overset{\sigma}\Rightarrow\Delta\sslash G'\sslash\Sigma\overset{\tau}{\Rightarrow}\Lambda\sslash G''$. Since $H$ is $\tau$-reduced, $H$ is identical to all its $\Box \mathrm{L'}$ $\tau$-reducts. Therefore, $\phi\in\Gamma(H,\tau)$.\\
(7) follows from (5') and (6).  
\end{proof}
The following definitions 20 and 22, and propositions 21 and 23 are left unchanged. 

Lastly, we change the definition of a model used in Proposition 24 to use $R^+$ in the $\mathrm{RK4_{Cut}}$ case and $R^*$ in the $\mathrm{RS4_{Cut}}$ case, and employ our additions to Proposition 19, (6) and (7) in the proof.  For $\mathrm{RK4_{Cut}}$ in the proof of Proposition 24, in the case where $\Box\phi\in\Gamma(\sigma)$ we use Proposition 19(7) instead of Proposition 19(5). For $\mathrm{RK4_{Cut}}$ in the proof of the case where $\Box\phi\in\Gamma(\sigma)$, we use Proposition 19(7).

\begin{theorem}
\label{th:RK4CutComplete}
If $\vDash_{\mathbf{K4}} H$ then $\vdash_{\mathrm{RK4_{Cut}}} H$
\end{theorem}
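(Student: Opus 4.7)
The plan is to adapt Burns and Zach's cut-free hypersequent completeness proof for $\mathrm{RK}$ by making precisely the local modifications spelled out in the preceding paragraphs, with the added ingredient of $\mathrm{Cut}$ used to justify a strengthened $\Box\mathrm{L}$ reduction. The overall strategy is contrapositive: assuming $\nvdash_{\mathrm{RK4_{Cut}}} H$, I will build a saturated hypersequent $H^\ast$ extending $H$ that remains unprovable, then read off a $\mathbf{K4}$ countermodel from $H^\ast$.

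First I would set up the reduction calculus as in Burns and Zach but with $\Box\mathrm{L}$ replaced by $\Box\mathrm{L}'$, so that an occurrence of $\Box\phi$ in the antecedent of a sequent $\sigma$ can be reduced against \emph{any} later sequent $\tau$ in the hypersequent, not merely the immediate successor. The derivation displayed in the excerpt shows that $\Box\mathrm{L}'$ preserves unprovability in $\mathrm{RK4_{Cut}}$: if the $\Box\mathrm{L}'$-reduct were provable, then using $T\mathrm{L}$ together with a cut against a short derivation of $\Box\phi \Rightarrow {}\sslash{} \Rightarrow \sslash \Rightarrow \phi$ (available via $EW\mathrm{L}$, $\Box\mathrm{L}$ and $EW$) would yield a proof of the original hypersequent. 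Then, following Burns and Zach's reduction procedure unchanged, one obtains a saturated fair sequence of reductions whose limit $H^\ast$ is unprovable and closed under all the reduction rules, including $\Box\mathrm{L}'$.

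Next I would augment Proposition~19 with clauses (6) and (7) as indicated, verifying (6) directly from the fact that $\Box\mathrm{L}'$-saturation forces $\phi$ into every later component whenever $\Box\phi$ sits in an earlier one, and (7) by combining (6) with reflexive-case clause~(5'). These clauses are exactly what is needed so that the Truth Lemma survives when the accessibility relation of the countermodel is taken to be the transitive closure $R^+$ of the one-step hypersequent order $R^1$ rather than $R^1$ itself: if $\Box\phi \in \Gamma(\sigma)$ and $\sigma R^+ \tau$, clause~(6) delivers $\phi \in \Gamma(\tau)$, and hence by induction $v(\phi,\tau)=1$ where truth/falsity is defined from the left/right sides of $H^\ast$ in the standard saturation-to-model manner.

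Finally I would define $\mathfrak{M}$ on the worlds of $H^\ast$ with $R := R^+$, which is transitive by construction, so $\mathfrak{M}$ is a $\mathbf{K4}$ frame. The Truth Lemma then shows $\mathfrak{M}$ is a countermodel to $H^\ast$ along the obvious branch, and since $H$ embeds in $H^\ast$, also to $H$, contradicting $\vDash_{\mathbf{K4}} H$. The main obstacle I expect is the bookkeeping in the modified Proposition~19, specifically showing that saturation under $\Box\mathrm{L}'$ (which now fires at arbitrary distance) still terminates in the fair enumeration and does not spoil the other saturation clauses; once that is in place, the rest of Burns and Zach's argument transfers with the $R^+$ replacement being the only substantive change. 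The $\mathrm{RS4_{Cut}}$ case is identical with $R^\ast$ in place of $R^+$ and clause~(7) in place of~(6).
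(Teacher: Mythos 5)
Your proposal is correct and follows essentially the same route as the paper: replace the $\Box\mathrm{L}$ reduction with $\Box\mathrm{L}'$ (justified via $T\mathrm{L}$ and $\mathrm{Cut}$), add the saturation clauses (6)/(7), and read off the countermodel with accessibility $R^+$ (resp.\ $R^*$), which is exactly the modification of Burns and Zach's Proposition 24 that the paper's proof invokes.
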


\begin{proof}
This follows from the modified proof of Proposition 24 above, setting the accessibility relation to $R^+$. 
\end{proof}

\begin{theorem}
\label{th:RS4CutComplete}
If $\vDash_{\mathbf{S4}} H$ then $\vdash_{\mathrm{RS4_{Cut}}} H$
\end{theorem}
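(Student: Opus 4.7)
The plan is to mirror the proof of Theorem \ref{th:RK4CutComplete}, adapting it from $\mathbf{K4}$ to $\mathbf{S4}$ by taking the accessibility relation of the canonical model to be $R^*$ (the reflexive transitive closure of $R^1$) rather than $R^+$. Reflexivity and transitivity of $R^*$ are immediate from its definition, so the countermodel constructed for an unprovable hypersequent will live on a genuine $\mathbf{S4}$ frame. The construction of the saturated hypersequent and the branch structure from the modified Proposition 24 transfers verbatim; only the verification of the truth lemma at $\Box$-formulas needs adjustment.

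For the truth lemma, in the case where $\Box\phi \in \Gamma(H,\sigma)$, I would invoke the strengthened Proposition 19(7): whenever $\sigma R^* \tau$ and $\tau$ occurs in the saturated $H$, we have $\phi \in \Gamma(H,\tau)$. Since the accessibility in the canonical model is exactly $R^*$, this immediately gives $v(\phi,\tau)=1$ by induction hypothesis for every $R^*$-successor $\tau$ of $\sigma$, hence $v(\Box\phi,\sigma)=1$ as required. The converse direction, where $\Box\phi \notin \Gamma(H,\sigma)$, goes through by the same $R^1$-successor witness argument that Burns and Zach use for the base case.

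Soundness of the $\Box\mathrm{L}'$ reduction in $\mathrm{RS4_{Cut}}$ is inherited directly from the derivation displayed in the appendix for $\mathrm{RK4_{Cut}}$: it uses only $T\mathrm{L}$, $EW\mathrm{L}$, $\Box\mathrm{L}$, $EW$, and $\mathrm{Cut}$, all of which are rules of $\mathrm{RS4_{Cut}}$. The presence of $EC$ in $\mathrm{RS4}$ (compared to $\mathrm{RK4}$) does not disturb the reduction procedure, because saturation is driven by the operational and $\Box$-reduction rules and because components are sets of formulas, so contracting duplicate components cannot introduce new unreduced material.

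The one place that genuinely requires the $\mathbf{S4}$-specific ingredient is the reflexive case of the truth lemma, i.e.\ instances of $\sigma R^* \sigma$. This is precisely what Proposition 19(7) delivers by taking $\tau = \sigma$: whenever $\Box\phi \in \Gamma(H,\sigma)$, saturation under $\Box\mathrm{L}'$ plus the reflexivity built into $R^*$ forces $\phi \in \Gamma(H,\sigma)$ as well. I expect this reflexive instance to be the main obstacle worth double-checking — not because the argument is hard, but because it is the only place where the difference between $R^+$ and $R^*$ really bites, and one must confirm that clauses (5'), (6), and (7) of the extended Proposition 19 together guarantee that every $R^*$-step (including the trivial one) is witnessed in the saturated hypersequent.
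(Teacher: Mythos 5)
Your proposal matches the paper's proof, which is exactly the one-line observation that the modified Proposition 24 goes through with the accessibility relation set to $R^*$, using the strengthened Proposition 19(7) in the $\Box\phi\in\Gamma(\sigma)$ case and inheriting the $\Box\mathrm{L}'$ reduction's soundness from the displayed $\mathrm{Cut}$-derivation. Your flag about the reflexive instance is resolved just as you suspect: the trivial $R^*$-step is covered not by $\Box\mathrm{L}'$ saturation itself but by clause (5'), and the paper notes that (7) follows from (5') together with (6).
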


\begin{proof}
This follows from the modified proof of Proposition 24 above, setting the accessibility relation to $R^*$. 
\end{proof}

\small
\bibliographystyle{plain}
\bibliography{Cut-Free_Hypersequent_S4_and_RB_are_Hypersequent_Incomplete}

\end{document}